\documentclass[12pt,a4paper]{article}
\usepackage{amssymb,amsmath,amsthm}
\usepackage{graphicx}
\usepackage{hyperref}
\usepackage{xcolor}

\newcommand\cA{{\mathcal A}}

\newcommand\cF{{\mathcal F}}

\newcommand\ex{\ensuremath{\mathrm{ex}}}
\newcommand\exo{\ensuremath{\mathrm{ex_o}}}
\newcommand\exd{\ensuremath{\mathrm{ex_d}}}

\theoremstyle{plain}
\newtheorem{theorem}{Theorem}[section]
\newtheorem{lemma}[theorem]{Lemma}
\newtheorem{corollary}[theorem]{Corollary}

\newtheorem{proposition}[theorem]{Proposition}

\theoremstyle{definition}

\newtheorem{definition}[theorem]{Definition}

\newtheorem{claim}[theorem]{Claim}

\newtheorem*{lemma*}{Lemma}
\newtheorem*{thm*}{Theorem}

\newcommand\cref[1]{Corollary~\ref{cor:#1}}

\title{On oriented Tur\'an problems}

\author{D\'aniel Gerbner\thanks{HUN-REN Alfr\'ed R\'enyi Institute of Mathematics, E-mail: \texttt{gerbner@renyi.hu}. Research supported by the National Research, Development and Innovation Office - NKFIH under the grant KKP-133819 and by the J\'anos Bolyai scholarship.} , Xuanrui Hu\thanks{School of Mathematics and Statistics, Ningbo University, Ningbo 315211, China. Email: \texttt{huxuanrui1108@163.com.}} , Yuefang Sun\thanks{Corresponding author. School of Mathematics and Statistics, Ningbo University, Ningbo 315211, China. Email: \texttt{sunyuefang@nbu.edu.cn}. Research supported by National Natural Science Foundation of China under Grant No. 12371352 and Yongjiang Talent Introduction Programme of Ningbo under Grant No. 2021B-011-G.}}
\date{}
\begin{document}

\maketitle
\begin{abstract}
The oriented Tur\'{a}n number of a given oriented graph $\overrightarrow{F}$, denoted by $\exo(n,\overrightarrow{F})$, is the largest number of arcs in $n$-vertex $\overrightarrow{F}$-free oriented graphs. This concept could be seen as an oriented version of the classical Tur\'{a}n number. 

In this paper, we first prove several propositions that give exact results for several oriented graphs. In particular, we determine all exact values of $\exo(n,\overrightarrow{F})$ for every oriented graph $\overrightarrow{F}$ with at most three arcs and sufficiently large $n$. After that, we prove a stability result and use it to determine the Turán number of an orientation of $C_4$. Finally, we prove oriented versions of the random zooming theorem by  Fern\'andez, Hyde, Liu, Pikhurko and Wu and the almost regular subgraph theorem by Erd\H{o}s and Simonovits, and use them to obtain an oriented version of the F\"{u}redi-Alon-Krivelevich-Sudakov Theorem, which generalizes the famous KST Theorem. 

\end{abstract}

\vspace{0.3cm}

{\bf Keywords}: Oriented graph; oriented Tur\'{a}n number; compressibility

\section{Introduction}

One of the most fundamental theorems in extremal graph theory is due to
Turán \cite{T1941}, who determined the largest number of edges in a $K_k$-free graph on $n$ vertices. More generally, the Turán number $\ex(n,F)$ is the largest number of edges in $n$-vertex graphs that do not contain $F$ as a not necessarily induced subgraph. The celebrated 
Erdős-Stone-Simonovits \cite{ES1946,ES1966} states that for any graph $F$, we have $\ex(n,F)=\left(\frac{\chi(F)-2}{\chi(F)-1}+o(1)\right)\binom{n}{2}$, which determines the asymptotics of the Turán number for non-bipartite graphs.

In this paper we study analogous problems for oriented graphs.
Given an oriented graph $\overrightarrow{F}$, we study the largest number of arcs in $n$-vertex oriented graphs that do not contain $\overrightarrow{F}$ as a subgraph. This quantity is denoted by $\exo(n,\overrightarrow{F})$, and its study was initiated by Valadkhan \cite{vala}, who proved an analogue of the Erdős-Stone-Simonovits theorem. We will also consider the quantity $\exd(n,\overrightarrow{F})$, which is the largest number of arcs in $n$-vertex \emph{directed} graphs that do not contain $\overrightarrow{F}$ as a subgraph. Clearly, $\exo(n,\overrightarrow{F})\le \exd(n,\overrightarrow{F})\le 2\exo(n,\overrightarrow{F})$. Let us remark that $\exd(n,\overrightarrow{F})$ has been more widely studied, first by Brown and Harary \cite{bh}.

 Let $\overrightarrow{T_k}$ denote
the transitive
tournament on $k$ vertices. Let
$\overrightarrow{P_k}$ be the directed path on $k$ vertices, where every arc is directed towards the same endpoint of the path. Similarly $\overrightarrow{C_k}$ denotes the directed cycle, where each vertex has an incoming and an outgoing arc. Let $\overrightarrow{S_{p,q}}$ be the orientation of a star on $p + q + 1$ vertices with center vertex of in-degree $p$ and out-degree $q$.

A {\em homomorphism} from a digraph $H$ to another digraph $D$ is a mapping $f:V(H)\rightarrow V(D)$ such that $f(u)f(v)\in E(D)$ whenever $uv\in E(H)$. The \textit{compressibility} $z(\overrightarrow{F})$ of an oriented graph $\overrightarrow{F}$ is the smallest $k$ such that there is a homomorphism from $\overrightarrow{F}$ to any tournament of order $k$.

\begin{theorem}[Valadkhan \cite{vala}]\label{ValadkhanTHM} For any acyclic oriented graph $\overrightarrow{F}$ we have 

    \[\exo(n,\overrightarrow{F})=\left(\frac{z(\overrightarrow{F})-2}{z(\overrightarrow{F})-1}+o(1)\right)\binom{n}{2}.\]
\end{theorem}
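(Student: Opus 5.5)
Write $z=z(\overrightarrow{F})$, and note first that $z$ is finite because $\overrightarrow{F}$ is acyclic. Fix a topological ordering $v_1,\dots,v_f$ of $\overrightarrow{F}$. Every tournament on $f$ vertices contains a transitive subtournament on about $\log_2 f$ vertices, which is not yet enough. A cleaner route is this: every tournament on $2^{f-1}$ vertices contains $\overrightarrow{T_f}$ (Erd\H{o}s--Moser), and $\overrightarrow{F}$ maps into $\overrightarrow{T_f}$ along its topological order, so $z\le 2^{f-1}$. The proof has two parts, a lower-bound construction and an upper bound via the Erd\H{o}s--Stone theorem.

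\emph{Lower bound.} By minimality of $z$ there is a tournament $T$ on $z-1$ vertices that admits no homomorphism from $\overrightarrow{F}$. Take the balanced blow-up of $T$ on $n$ vertices: partition $[n]$ into $z-1$ nearly equal classes $V_1,\dots,V_{z-1}$, and direct every pair between $V_i$ and $V_j$ from $V_i$ to $V_j$ whenever $ij\in E(T)$. Leave the classes independent. The result is an oriented graph whose underlying graph is the Tur\'an graph, so it has $\left(\frac{z-2}{z-1}+o(1)\right)\binom n2$ arcs. Any copy of $\overrightarrow{F}$ in it would give a homomorphism to $T$ by sending each vertex to the index of its class; this is a valid homomorphism because arcs never lie inside a class. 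Hence the blow-up is $\overrightarrow{F}$-free.

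\emph{Upper bound.} Let $D$ be an $n$-vertex oriented graph with $\left(\frac{z-2}{z-1}+\varepsilon\right)\binom n2$ arcs. By the Erd\H{o}s--Stone theorem applied to the underlying graph, $D$ contains a complete $z$-partite graph $K_z(t)$ with parts $A_1,\dots,A_z$ of size $t$, for any fixed $t$ once $n$ is large. I then need to clean this structure into a blow-up of a tournament. For each pair $i<j$, colour the edges of $K(A_i,A_j)$ by direction. A Ramsey/K\H{o}v\'ari--S\'os--Tur\'an-type argument (bipartite Ramsey for $K_{s,s}$) yields subsets $A_i'\subseteq A_i$, $A_j'\subseteq A_j$ of size $s$, with $s\to\infty$ as $t\to\infty$, such that all arcs between them point the same way. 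Iterating over all $\binom z2$ pairs, each time shrinking the current sets, leaves sets $B_1,\dots,B_z$ of size $s=f$ on which $D$ contains a full blow-up of some tournament $T'$ on $z$ vertices. Here $t$ is chosen as a suitably iterated function of $f$ and $z$.

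By the definition of $z$ there is a homomorphism $\varphi:\overrightarrow{F}\to T'$. Embed $\overrightarrow{F}$ by mapping the vertices of $\varphi^{-1}(i)$ injectively into $B_i$; this is possible since $|B_i|\ge f$. Each arc $uv$ of $\overrightarrow{F}$ has $\varphi(u)\varphi(v)\in E(T')$, and in particular $\varphi(u)\neq\varphi(v)$ because tournaments have no loops. So the image of $uv$ is a correctly directed arc between distinct classes, and $D$ contains $\overrightarrow{F}$. This gives $\exo(n,\overrightarrow{F})\le\left(\frac{z-2}{z-1}+\varepsilon\right)\binom n2$ for large $n$.

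The main obstacle is the cleaning step: getting uniform orientations between every pair of parts at once. It is handled by applying bipartite Ramsey repeatedly, with the part sizes shrinking a bounded number of times. Only finitely many pairs are involved, so choosing $t$ large enough as a function of $f$ and $z$ suffices.
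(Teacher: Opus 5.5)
Your proof is correct. The paper gives no proof of this statement; it quotes it from Valadkhan. The only related reasoning in the paper is the remark that, in the framework of Gerbner, Hama Karim and Kucheriya, the abstract chromatic number of $(\mathcal{A}(\overrightarrow{F}),\mathcal{F}(\overrightarrow{F}))$ equals $z(\overrightarrow{F})$ for acyclic $\overrightarrow{F}$. A general Erd\H{o}s--Stone-type theorem for that framework then gives the asymptotics.

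That identification tacitly rests on exactly your two ingredients. First, the blow-up of a tournament on $z-1$ vertices admitting no homomorphism from $\overrightarrow{F}$ shows that complete $(z-1)$-partite graphs lie in $\mathcal{A}(\overrightarrow{F})$. Second, your iterated bipartite Ramsey cleaning shows that every orientation of $K_z(t)$ with $t$ large contains $\overrightarrow{F}$, i.e.\ $K_z(t)\in\mathcal{F}(\overrightarrow{F})$. So you are giving the standard direct argument: classical Erd\H{o}s--Stone on the underlying graph, then cleaning the orientations. This makes explicit what the paper takes for granted. The framework route gains nothing for this statement, beyond transferring to other parameters of the underlying graph.

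A few small points to tidy up:
\begin{itemize}
\item The formula needs $\overrightarrow{F}$ to have at least one arc (otherwise $z=1$). This is what guarantees that the tournament $T$ on $z-1\ge 1$ vertices exists.
\item In the cleaning step, say explicitly that the monochromatic $K_{s,s}$ has one side in $A_i$ and the other in $A_j$. This is what K\H{o}v\'ari--S\'os--Tur\'an applied to the denser colour class gives, with $s$ of order $\log t$.
\item Also truncate the current sets to a common size before each application of the bipartite Ramsey step.
\item The opening remark about transitive subtournaments of order about $\log_2 f$ is unused and can be dropped.
\end{itemize}
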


The above theorem determines the asymptotics of the Turán number for most oriented graphs. If $\overrightarrow{F}$ contains a directed cycle, then the transitive tournament is $\overrightarrow{F}$-free, thus $\exo(n,\overrightarrow{F})=\binom{n}{2}$. If $z(\overrightarrow{F})=2$, then the above theorem implies only $\exo(n,\overrightarrow{F})=o(n^2)$. Note that $z(\overrightarrow{F})=2$ if and only if $\overrightarrow{F}$ is \textit{antidirected}, i.e., contains only sources and sinks.

The rest of \cite{vala} is dedicated to giving bounds on the compressibility. Another paper \cite{gjkns} deals with this topic and addresses this topic, also focusing on bounding the compressibility. Further results have been obtained in the case of antidirected trees in the directed setting that are interesting for us.

Graham \cite{gra} showed that for each antidirected tree $\overrightarrow{F}$ with $k$ arcs, $\exd(n,\overrightarrow{F})=O(n)$. The constant factor was improved later, to $\exd(n,\overrightarrow{F})\le 4kn$ by Burr \cite{burr}. Addario-Berry, Havet, Linhares Sales, Reed and Thomass\'e \cite{ahlrt} conjectured that $\exd(n,\overrightarrow{F})\le (k-1)n$ and proved this for trees of diameter at most 3. Grzesik and Skrzypczyk \cite{gs} showed that for the  antidirected path $\overrightarrow{F}$ with $k$ arcs, $\exo(n,\overrightarrow{F})\le (k-1+\sqrt{k-3})n$.

Besides trees, we are not aware of any results that goes beyond determining the compressibility. In particular, no exact results are obtained. In this paper, we obtain exact results and also sharp upper bounds for antidirected graphs.

\subsection{Abstract chromatic number}

Compressibility fits into a recent framework introduced by Gerbner, Hama Karim and Kucheriya \cite{ghk}, based on the work of Coregliano and Razborov \cite{CoregRazb}. Let $\cA(\overrightarrow{F})$ denote the family of unoriented graphs with an orientation that avoids $\overrightarrow{F}$, and $\cF(\overrightarrow{F})$ denote the family of unoriented graphs that contain $\overrightarrow{F}$ in every orientation. 

Clearly, if $\overrightarrow{G}$ is $\overrightarrow{F}$-free, then its underlying graph $G$ is in $\cA(\overrightarrow{F})$, and for every $G\in\cA(\overrightarrow{F})$ there is an $\overrightarrow{F}$-free orientation. Therefore, $\exo(n,\overrightarrow{F})=\max\{|E(G)|: G\in\cA(\overrightarrow{F}), \, |V(G)|=n\}$.

The \textit{abstract chromatic number} of the partition  $(\cA(\overrightarrow{F}),\cF(\overrightarrow{F})$ is defined to be the largest $k$ such that every complete $(k-1)$-partite graph with each part of order at least $n$ is in $\cA$, for every sufficiently large $n$, or $\infty$ if $K_n\in\cA$ for all sufficiently large $n$. Therefore, the abstract chromatic number is the compressibility of $\overrightarrow{F}$ if $\overrightarrow{F}$ is acyclic, and $\infty$ if $\overrightarrow{F}$ contains a directed cycle.

The main advantage of this framework is that these results extend to some cases when we want to maximize some graph parameter other than edges, for example the number of some other (unoriented) subgraphs, or the spectral radius or some topological indices of the underlying unoriented graph, see \cite{ghk} for more details. 

Another advantage, more relevant to us, is that some other results concerning Turán numbers also extend to our setting. In particular, the following stability theorem holds by applying Theorem 2.1 from \cite{ghk} in our setting.

\begin{theorem}\label{stabi}
    If $\overrightarrow{G}$ is an $\overrightarrow{F}$-free oriented graph with at least $\left(\frac{z(\overrightarrow{F})-2}{z(\overrightarrow{F})-1}-o(1)\right)\binom{n}{2}$ arcs, then we can turn the underlying graph $G$ to the Tur\'an graph $T(n,z(\overrightarrow{F})-1)$ by adding and deleting $o(n^2)$ edges.
\end{theorem}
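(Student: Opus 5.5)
The plan is to derive the statement from the general abstract-chromatic-number framework. Concretely, I would check that the partition $(\cA(\overrightarrow{F}),\cF(\overrightarrow{F}))$ satisfies the hypotheses of Theorem 2.1 of \cite{ghk}, and then apply that theorem directly to the underlying graph $G$. Here $\overrightarrow{F}$ is acyclic: if it contained a directed cycle, then $\exo(n,\overrightarrow{F})=\binom{n}{2}$ and $z(\overrightarrow{F})$ would not be the relevant parameter.

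The hypotheses to verify are the standard ones for such a partition.

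First, hereditary closure: if $G\in\cA(\overrightarrow{F})$ and $H\subseteq G$, then $H\in\cA(\overrightarrow{F})$. This holds because restricting an $\overrightarrow{F}$-free orientation of $G$ to $H$ is again $\overrightarrow{F}$-free. Dually, $\cF(\overrightarrow{F})$ is closed under taking supergraphs.

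Second, $\cF(\overrightarrow{F})$ should contain a finite graph. This uses the definition of $z$: every tournament on $z(\overrightarrow{F})$ vertices receives a homomorphism from $\overrightarrow{F}$. Let $k=z(\overrightarrow{F})$ and take $t=|V(\overrightarrow{F})|$. Orient the complete $k$-partite graph $K_k(t)$ arbitrarily. By a Ramsey/pigeonhole argument on the orientations between parts, for large part sizes one finds subparts of size $t$ between which all arcs go the same way. This gives a blow-up of some tournament on $k$ vertices, and that blow-up contains $\overrightarrow{F}$ via the homomorphism. Hence $K_k(m)\in\cF(\overrightarrow{F})$ for suitable $m$.

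Third, complete $(k-1)$-partite graphs with large parts lie in $\cA(\overrightarrow{F})$. By minimality of $z$, some tournament $T$ on $k-1$ vertices admits no homomorphism from $\overrightarrow{F}$. Its blow-up, which orients $T(n,k-1)$, is $\overrightarrow{F}$-free, since any copy of $\overrightarrow{F}$ would project to a homomorphism into $T$. Thus the abstract chromatic number equals $k$, as the paper has already noted.

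With these facts in place, Theorem 2.1 of \cite{ghk} states that any $G\in\cA$ on $n$ vertices with at least $\left(\frac{k-2}{k-1}-o(1)\right)\binom n2$ edges is $o(n^2)$-close to $T(n,k-1)$. Since $\overrightarrow{G}$ is $\overrightarrow{F}$-free, $G\in\cA(\overrightarrow{F})$, and $|E(G)|$ equals the number of arcs, so the statement follows immediately.

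If one instead wanted a self-contained proof, the key step would be the following removal-type argument. Apply Szemerédi's regularity lemma to $G$ and form the reduced graph $R$. Suppose $R$ contains a $K_k$; because $\overrightarrow{F}$ fixes only finitely many vertices, the counting lemma embeds $K_k(m)$ into $G$. Then the second fact forces $\overrightarrow{F}\subseteq\overrightarrow{G}$, a contradiction. So $R$ is $K_k$-free, and it has nearly $\frac{k-2}{k-1}\binom{|R|}{2}$ edges. Classical Erdős–Simonovits stability then makes $R$ close to a Turán graph, and this transfers back to $G$ with $o(n^2)$ edits.

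The main obstacle is the embedding of $K_k(m)$ for the second fact. Regularity gives dense pairs, not complete ones, and the orientation on those pairs is arbitrary, so a Kővári–Sós–Turán style argument is needed to extract large complete multipartite subgraphs. After that, the pigeonhole homogenisation of the orientations has to be carried out.
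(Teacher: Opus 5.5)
Your proposal is correct and takes the same route as the paper, which simply invokes Theorem 2.1 of \cite{ghk} for the partition $(\cA(\overrightarrow{F}),\cF(\overrightarrow{F}))$, whose abstract chromatic number is $z(\overrightarrow{F})$; your checks of the hypotheses (that $K_k(m)\in\cF(\overrightarrow{F})$ via homogenising the orientations, and that $K_{k-1}(n)\in\cA(\overrightarrow{F})$ via the blow-up of a homomorphism-free tournament) are exactly what the paper leaves implicit. Your self-contained regularity sketch is more work than needed: since $K_k(m)\in\cF(\overrightarrow{F})$ makes $G$ $K_k(m)$-free, the classical Erd\H{o}s--Simonovits stability theorem for the fixed $k$-chromatic graph $K_k(m)$ gives the conclusion directly, and the ``obstacle'' you flag is already handled by the standard embedding lemma together with your second fact.
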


As it is often the case in extremal Combinatorics, stability will help us obtain exact results.
Let us remark that for us, similarly useful from this framework is the approach that we look at the underlying unoriented graphs. 

\subsection{Our results}

The following result by F\"{u}redi~\cite{Furedi}, and Alon, Krivelevich and Sudakov~\cite{Alon-Krivelevich-Sudakov} generalizes the famous KST Theorem \cite{kst}:

\begin{theorem}\textbf{(F\"{u}redi-Alon-Krivelevich-Sudakov Theorem)}\label{FAKS}
Let $H$ be a bipartite graph with bipartition $A\cup B$ where each vertex in $A$ has degree at most $r$ in $B$. Then there exists a constant $c=c(H)$ such that $$\ex(n,H)\le c\cdot n^{2-\frac{1}{r}}.$$    
\end{theorem}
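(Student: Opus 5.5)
We prove the statement by the dependent random choice method of Alon, Krivelevich and Sudakov. Write $a=|A|$, $b=|B|$, and let $G$ be an $n$-vertex graph with $e(G)\ge c\,n^{2-1/r}$. The goal is to find a set $U\subseteq V(G)$ with $|U|\ge a+b$ in which every $r$-subset of vertices has at least $a+b$ common neighbours. Once such a $U$ exists, we embed $H$ greedily. First we map $B$ injectively into $U$. Then we process the vertices of $A$ one by one. A vertex $v\in A$ has at most $r$ neighbours in $B$, and their images lie in $U$, so they have at least $a+b$ common neighbours in $G$. At most $a+b-1$ of these are already used, so some common neighbour is free, and we map $v$ there. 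This embeds $H$ into $G$ and shows $\ex(n,H)\le c\,n^{2-1/r}$.

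To construct $U$, let $d=2e(G)/n\ge 2c\,n^{1-1/r}$ be the average degree. Choose $t=r$ vertices $x_1,\dots,x_r$ uniformly at random with repetition, and let $W$ be their common neighbourhood. By convexity, $\mathbb{E}|W|=\sum_v (d(v)/n)^r\ge n(d/n)^r\ge (2c)^r$. Call an $r$-set $S$ \emph{bad} if it has fewer than $m:=a+b$ common neighbours. A bad $S$ lies in $W$ only if every $x_i$ is a common neighbour of $S$, which happens with probability less than $(m/n)^r$. Hence the expected number of bad $r$-sets inside $W$ is at most $\binom{n}{r}(m/n)^r\le m^r/r!$. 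Therefore
\[
\mathbb{E}\bigl[\,|W|-\#\{\text{bad } r\text{-sets in } W\}\,\bigr]\ \ge\ (2c)^r-\frac{m^r}{r!},
\]
and we fix a choice of $x_1,\dots,x_r$ achieving at least this expectation. Deleting one vertex from each bad $r$-set in $W$ leaves a set $U$ with $|U|\ge (2c)^r-m^r/r!$, and $U$ contains no bad $r$-set. Taking $c$ large enough in terms of $a+b$ and $r$, for instance $(2c)^r\ge 2m^r$, gives $|U|\ge m$, as required.

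The main obstacle is calibrating the exponent: choosing exactly $t=r$ random vertices is what makes $n(d/n)^r$ a constant when $d\asymp n^{1-1/r}$. We must also check that the bad-set bound $\binom{n}{r}(m/n)^r$ does not depend on $n$. Both come out constant, so $c$ depends only on $H$. Small $n$, where $n<a+b$ or the constant bounds are vacuous, is absorbed by enlarging $c$, since then $c\,n^{2-1/r}\ge\binom n2$ holds trivially.
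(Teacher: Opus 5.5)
Your argument is correct. It is the standard dependent random choice proof of Alon, Krivelevich and Sudakov, and every estimate checks out; in fact $c=a+b$ already satisfies $(2c)^r\ge 2m^r$. One detail deserves a sentence. A vertex of $A$ with $k<r$ neighbours in $B$ has $k$ image vertices, and you must extend them to an $r$-subset of $U$, which needs $|U|\ge r$. Your bound gives $|U|\ge 2m^r-m^r/r!\ge m^r\ge r$ once $H$ has an edge, so this holds. A vertex of $A$ with no neighbours in $B$ can go to any unused vertex, since $n\ge |U|\ge m$.

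The paper does not prove this statement; it is quoted from the literature. The closest argument in the paper is the proof of the oriented strengthening, Theorem \ref{main}, which implies the statement by orienting $G$ arbitrarily. That proof takes a quite different route:
\begin{itemize}
\item a random balanced bipartition keeps a quarter of the arcs, all going $X\to Y$;
\item an Erd\H{o}s--Simonovits-type lemma extracts a $K$-almost regular subdigraph;
\item random zooming (Theorem \ref{zooming}) passes to a random subset $W'$ of the out-side so that $|U'|>h\binom{|W'|}{r}$;
\item a pigeonhole ``colouring'' argument then produces an $(r,h)$-rich set.
\end{itemize}
Only the final greedy embedding (Lemma \ref{proRICHSET}) coincides with yours.

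Your route is much shorter, works for every $n$, and gives a constant linear in $|V(H)|$. The paper's route needs $n$ large and a constant involving $2^{1/\varepsilon^2}$. Your argument also transfers verbatim to the oriented setting. Let $W$ be the common out-neighbourhood of $x_1,\dots,x_r$, and call an $r$-set bad if it has fewer than $h$ common in-neighbours. Then $\mathbb{E}|W|=\sum_v (d^-(v)/n)^r\ge n\,(e(G)/n^2)^r\ge c^r$, and the same deletion argument finishes. So dependent random choice would prove Theorem \ref{main} directly, without the almost-regularity and zooming machinery. The paper uses that machinery mainly to mirror the random zooming framework of Fern\'andez, Hyde, Liu, Pikhurko and Wu.
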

Our main theorem is the following oriented strengthening.

\begin{theorem}\label{main}
Let $H=H(A\cup B,A\rightarrow B)$ be an $h$-vertex bipartite digraph such that $|N_B^+(a)|\leq r$ for each $a\in A$. Then there exists a constant $c=c(H)$ such that $$\exo(n,H)\le c\cdot n^{2-\frac{1}{r}}.$$
\end{theorem}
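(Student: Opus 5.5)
Our strategy is to reduce to a dense bipartite oriented subgraph with all arcs going one way, and then run the dependent random choice argument of Alon, Krivelevich and Sudakov in its oriented form.

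\textbf{Reduction.} Let $\overrightarrow{G}$ be an $n$-vertex $H$-free oriented graph with $e(\overrightarrow{G})\ge c n^{2-1/r}$. Put every vertex independently and uniformly into one of two classes $X$ and $Y$, and keep only the arcs directed from $X$ to $Y$. Each arc survives with probability $1/4$, so for some partition the resulting bipartite oriented graph $\overrightarrow{G'}$ (all arcs $X\to Y$) has at least $\frac{c}{4}n^{2-1/r}$ arcs. It therefore suffices to prove the following: if a bipartite digraph with parts $X,Y$, all arcs directed $X\to Y$, has at least $c' n^{2-1/r}$ arcs, then it contains $H$ with $A$ embedded into $X$ and $B$ into $Y$.

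Once all arcs go from $X$ to $Y$, orientations carry no extra information. The underlying bipartite graph $G'$ contains the underlying graph of $H$ with $A\subseteq X$ and $B\subseteq Y$ exactly when $\overrightarrow{G'}$ contains $H$ with the correct directions, because every arc of $H$ goes $A\to B$ and every arc of $G'$ goes $X\to Y$. So we need the one-sided version of Theorem~\ref{FAKS}: the embedding must place the side $A$ of $H$, whose vertices have degree at most $r$, into a prescribed side $X$.

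\textbf{Dependent random choice.} The Alon--Krivelevich--Sudakov proof provides exactly this one-sided embedding. Let $m=|X|+|Y|\le n$ and let $\bar d$ be the average degree of $X$-vertices into $Y$. Choose $t$ vertices $y_1,\dots,y_t\in Y$ uniformly with repetition, and let $U\subseteq X$ be their common in-neighbourhood. Convexity gives
\[
\mathbb{E}|U|\ge |X|(\bar d/|Y|)^t .
\]
Now call an $r$-set in $X$ bad if it has fewer than $h$ common out-neighbours in $Y$. The expected number of bad $r$-sets inside $U$ is at most $\binom{|X|}{r}(h/|Y|)^t$. With $t=r$ and $e(G')\ge c'n^{2-1/r}$ (so $\bar d$ is large), deleting one vertex from each bad $r$-set leaves a set $U'\subseteq X$ with $|U'|\ge h$ in which every $r$-subset has at least $h$ common out-neighbours in $Y$.

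\textbf{Embedding.} Embed $A$ injectively into $U'$ arbitrarily, then embed the vertices of $B$ one at a time. For each $b\in B$, its in-neighbours in $A$ form a set of size at most $r$. Their images lie in $U'$, hence have at least $h$ common out-neighbours in $Y$. Fewer than $h$ of these are already used, so $b$ can be placed on a fresh one. This produces a copy of $H$.

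The main obstacle is calibrating the parameters in the dependent random choice step. The sizes $|X|$ and $|Y|$ are unbalanced and unknown, so one must check that $|X|(\bar d/|Y|)^r-\binom{|X|}{r}(h/|Y|)^r\ge h$ for a suitable $c'=c'(h,r)$. This follows from $\bar d|X|\ge c'n^{2-1/r}$ and $|X|,|Y|\le n$, exactly as in the unoriented proof. If the bound is too tight, I would instead apply the Erd\H{o}s--Simonovits almost-regular subgraph theorem (in the oriented form announced in the abstract) before running dependent random choice.
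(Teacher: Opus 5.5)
The gap is in your embedding step: you apply the degree bound to the wrong side of $H$. The hypothesis is $|N_B^+(a)|\le r$ for every $a\in A$. Nothing bounds $|N_A^-(b)|$ for $b\in B$, so your sentence ``for each $b\in B$, its in-neighbours in $A$ form a set of size at most $r$'' is false in general. Take the antidirected $K_{t,r}$ with $|A|=t>r$ and $|B|=r$: every $b\in B$ has $t$ in-neighbours. Your set $U'\subseteq X$ only guarantees $h$ common out-neighbours for $r$-subsets, so you cannot place $B$ greedily. Your set-up paragraph correctly says the bounded-degree side $A$ goes into $X$, but the dependent random choice step is then run as if $B$ were the bounded side. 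As written, your argument proves the statement for digraphs in which every vertex of $B$ has in-degree at most $r$. That is the theorem for the reversed digraph, not the theorem itself.

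The repair is to put the rich set on the sink side, as in the paper's Lemma~\ref{proRICHSET}:
Choose $x_1,\dots,x_r\in X$ uniformly with repetition, and let $W_0\subseteq Y$ be their common out-neighbourhood.
Call an $r$-set in $Y$ bad if it has fewer than $h$ common in-neighbours in $X$, and delete a vertex from each bad $r$-set inside $W_0$.
With $e=e(\overrightarrow{G'})\ge c'n^{2-1/r}$ you get $\mathbb{E}|W_0|\ge e^r/(|X|^r|Y|^{r-1})\ge c'^r$.
The expected number of bad sets in $W_0$ is at most $\binom{|Y|}{r}(h/|X|)^r\le \frac{h^r}{r!\,c'^r}\cdot\frac{e^r}{|X|^r|Y|^{r-1}}$, using only $|X|,|Y|\le n$.
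So for large $c'$ you obtain an $(r,h)$-rich set $W'\subseteq Y$ with $|W'|\ge h$.
Embed $B$ into $W'$, then place each $a\in A$ on an unused common in-neighbour of the images of its at most $r$ out-neighbours. Equivalently, reverse every arc of both the host and $H$: this does not change $\exo$ and turns the hypothesis into the one your argument uses.

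With this correction your proof is complete, and it is a shorter route than the paper's. The paper balances the bipartition, then passes to a $K$-almost-regular subdigraph via an oriented Erd\H{o}s--Simonovits lemma (Lemma~\ref{almost regular}). It then finds the rich set with the oriented random zooming theorem (Theorem~\ref{zooming}), which takes a random subset of the sink side and applies a greedy colouring argument. A single round of dependent random choice handles unbalanced parts and irregular degrees directly, so you need neither balancing nor regularization.
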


In Section 2, we prove several propositions that give exact results for several oriented graphs. In particular, we determine all exact values of $\exo(n,\overrightarrow{F})$ for every oriented graph $\overrightarrow{F}$ with at most three arcs and sufficiently large $n$. In Section 3, we prove a strengthening of Theorem \ref{stabi} for some graphs and use it to determine the Turán number of an orientation of $C_4$. We prove Theorem \ref{main} in Section 4. We finish the paper with some concluding remarks in Section 5.

\section{Basic propositions}


\begin{proposition}\label{klikk}
    If any tournament on $z(\overrightarrow{F})$ vertices contains $\overrightarrow{F}$ as a subgraph, then $\exo(n,\overrightarrow{F})=|E(T(n,z(\overrightarrow{F})-1))|$.
\end{proposition}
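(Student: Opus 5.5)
Write $k=z(\overrightarrow{F})$. We prove the lower bound and the upper bound separately.

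\emph{Lower bound.} If $\overrightarrow{F}$ contains a directed cycle, then $\exo(n,\overrightarrow{F})=\binom{n}{2}$ as noted above. The hypothesis then forces a tournament on $k$ vertices that contains $\overrightarrow{F}$, which is impossible if every tournament is to contain it, unless the statement is read with $k=\infty$. So we may assume $\overrightarrow{F}$ is acyclic and $k$ is finite. By definition of $k$, there is a tournament $T$ on $k-1$ vertices admitting no homomorphism from $\overrightarrow{F}$. We orient the Tur\'an graph $T(n,k-1)$ with parts $V_1,\dots,V_{k-1}$ by letting every edge between $V_i$ and $V_j$ point in the direction of the arc between $i$ and $j$ in $T$. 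If this orientation contained a copy of $\overrightarrow{F}$, then mapping each vertex to the index of its part would be a homomorphism $\overrightarrow{F}\to T$, since adjacent vertices lie in distinct parts. This contradicts the choice of $T$. Hence $\exo(n,\overrightarrow{F})\ge |E(T(n,k-1))|$.

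\emph{Upper bound.} Let $\overrightarrow{G}$ be an $n$-vertex oriented graph with more than $|E(T(n,k-1))|$ arcs. By Tur\'an's theorem its underlying graph contains $K_k$. The arcs of $\overrightarrow{G}$ on these $k$ vertices form a tournament on $k$ vertices, which by hypothesis contains $\overrightarrow{F}$ as a subgraph. Therefore $\overrightarrow{G}$ is not $\overrightarrow{F}$-free, and $\exo(n,\overrightarrow{F})\le |E(T(n,k-1))|$.

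Neither step is a real obstacle. The only point needing care is the lower-bound construction, namely checking that a copy of $\overrightarrow{F}$ inside the blow-up of $T$ projects to a homomorphism into $T$. This holds because the blow-up has no arcs inside a part, so adjacent vertices of $\overrightarrow{F}$ map to distinct vertices of $T$, and each arc maps to an arc of $T$ with the same direction.
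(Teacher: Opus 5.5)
Your proof is correct and follows the paper's argument. The lower bound comes from an $\overrightarrow{F}$-free orientation of $T(n,z(\overrightarrow{F})-1)$; you make this explicit as the blow-up of a tournament on $z(\overrightarrow{F})-1$ vertices that admits no homomorphism from $\overrightarrow{F}$, which is exactly what the paper's ``clearly'' refers to. The upper bound comes from Tur\'an's theorem applied to the underlying graph, since any $K_{z(\overrightarrow{F})}$ carries a tournament that contains $\overrightarrow{F}$. Your opening remark about directed cycles is worded confusingly but is harmless: the transitive tournament shows the hypothesis fails when $\overrightarrow{F}$ has a directed cycle.
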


\begin{proof}
    Clearly, $T(n,z(\overrightarrow{F})-1)$ has an orientation that avoids $\overrightarrow{F}$, giving the lower bound. Let $\overrightarrow{G}$ be an $\overrightarrow{F}$-free oriented graph, then the underlying graph $G$ must be $K_{z(\overrightarrow{F})}$-free, since a $K_{z(\overrightarrow{F})}$ in $G$ is a tournament on $z(\overrightarrow{F})$ vertices in $\overrightarrow{G}$. Tur\'an's theorem \cite{T1941} completes the proof.
\end{proof}

\begin{corollary}\label{coroll}
     \textbf{(i)} $\exo(n,\overrightarrow{P_k})=|E(T(n,k-1))|$.
     
     \textbf{(ii)} For any tournament $\overrightarrow{F}$ we have $\exo(n,\overrightarrow{F})=|E(T(n,z(\overrightarrow{F})-1))|$.

     \textbf{(iii)} Let $\overrightarrow{F}$ be the oriented $C_4$ with arcs $\overrightarrow{v_1v_2}$, $\overrightarrow{v_2v_3}$, $\overrightarrow{v_3v_4}$, $\overrightarrow{v_1v_4}$. Then $\exo(n,\overrightarrow{F})=|E(T(n,3)|$.
\end{corollary}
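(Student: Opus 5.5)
Each part follows from Proposition~\ref{klikk}, so the plan is to check its hypothesis: determine $z(\overrightarrow{F})$ and show that every tournament on $z(\overrightarrow{F})$ vertices contains $\overrightarrow{F}$ as a subgraph. In every case the lower bound comes from an explicit homomorphism argument and the upper bound from a containment statement about small tournaments.

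For (i), set $z=z(\overrightarrow{P_k})$. By R\'edei's theorem every tournament has a Hamiltonian directed path. So every tournament on $k$ vertices contains $\overrightarrow{P_k}$ as a subgraph, which gives $z\le k$. Conversely, the transitive tournament $\overrightarrow{T_{k-1}}$ has no homomorphism from $\overrightarrow{P_k}$: a homomorphic image of a directed path in an acyclic tournament must be strictly increasing in the topological order, so it needs $k$ distinct vertices. Hence $z=k$, and Proposition~\ref{klikk} applies.

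For (ii), let $\overrightarrow{F}$ be a tournament on $m$ vertices. Any homomorphism from a tournament is injective, since adjacent vertices cannot map to the same vertex (there are no loops) and all pairs are adjacent. So a homomorphism from $\overrightarrow{F}$ into a tournament $T$ is an embedding. Therefore $z(\overrightarrow{F})$ is the smallest $k$ such that every $k$-vertex tournament contains $\overrightarrow{F}$, and the hypothesis of Proposition~\ref{klikk} holds by definition. The only point to check is that $z(\overrightarrow{F})$ is finite, and this holds because every large tournament contains every given tournament? That is false if $\overrightarrow{F}$ contains a directed cycle, because transitive tournaments avoid it. So I would interpret (ii) as covering only the case where $z$ is finite. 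If $z=\infty$, the convention that $T(n,\infty)$ is $K_n$ gives $\binom n2$, consistent with the remark in the introduction.

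For (iii), the oriented $C_4$ is acyclic with longest path $v_1v_2v_3v_4$, so $z\ge 4$, since otherwise it would map into $\overrightarrow{T_3}$, which forces a strictly increasing image of $\overrightarrow{P_4}$. For $z\le 4$ I need that every tournament on $4$ vertices contains $\overrightarrow{F}$. That is, each tournament must have a Hamiltonian path $v_1v_2v_3v_4$ together with the arc $v_1v_4$. I would check the four tournaments on $4$ vertices. In $\overrightarrow{T_4}$ the path $1234$ is present and so is the arc $14$. In the tournament with a source followed by a $\overrightarrow{C_3}$, start at the source $s$ and take $s,a,b,c$ along the $3$-cycle; the arc $sc$ is present. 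The sink case is dual: take $a,b,c,t$ with the arc $at$. The remaining tournament has a Hamiltonian cycle $1\to2\to3\to4\to1$ and both diagonals oriented. Here I would take a diagonal $x\to y$ and find a Hamiltonian path from $x$ to $y$ along the cycle and the other diagonal; a quick case check confirms this. This finite check is the only real work, so it is the main obstacle, though it is routine.
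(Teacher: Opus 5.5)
\textbf{The gap is in the strong $4$-vertex tournament in part (iii).} Parts (i) and (ii), and the bound $z(\overrightarrow{F})\ge 4$ in (iii), go exactly as in the paper. Your caveat about $z=\infty$ in (ii) is fair; the paper does not address it.

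The strong tournament is the case you yourself flagged as the real work. Your recipe there is to take a diagonal $x\to y$ and find a Hamiltonian path from $x$ to $y$ using cycle arcs and the other diagonal. That never works. Relative to the Hamiltonian cycle $1\to2\to3\to4\to1$, the four orientations of the diagonals are rotations of each other, so it suffices to take $1\to3$ and $2\to4$:
\begin{itemize}
\item The Hamiltonian paths from $1$ to $3$ are $1,2,4,3$ and $1,4,2,3$. The first needs $4\to3$, but $3\to4$ is an arc. The second needs $1\to4$, but $4\to1$ is an arc.
\item The Hamiltonian paths from $2$ to $4$ are $2,1,3,4$ and $2,3,1,4$. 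The first needs $2\to1$ and the second needs $3\to1$, and both arcs go the other way.
\end{itemize}
So the ``quick case check'' refutes your claim rather than confirming it.

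In fact this tournament contains exactly one copy of $\overrightarrow{F}$, and in it the arc $v_1v_4$ is a \emph{cycle} arc. The path is $2\to4\to1\to3$, which uses both diagonals and the cycle arc $4\to1$, and it is closed by the cycle arc $2\to3$. Equivalently, as in the paper:
\begin{itemize}
\item if $1\to3$ and $4\to2$, use $1\to3\to4\to2$ together with $1\to2$;
\item if $1\to3$ and $2\to4$, use $2\to4\to1\to3$ together with $2\to3$.
\end{itemize}

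With this fix, your enumeration of the four $4$-vertex tournaments is a valid route. The paper's version is slightly more economical. It starts from a R\'edei Hamiltonian path $v_1v_2v_3v_4$ and finishes at once if $v_1\to v_4$. Otherwise it handles only the Hamiltonian-cycle case above, so it never needs the classification of $4$-vertex tournaments.
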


We remark that Zhou and Li \cite{zl} showed that $\exd(n,\overrightarrow{P_k})=|E(T(n,k-1))|$ for sufficiently large $n$.

\begin{proof}
    As pointed out in \cite{gjkns}, $\overrightarrow{P_k}$ has no homomorphism into the transitive tournament on $k-1$ vertices, and by a theorem of Rédei \cite{red}, every tournament on $k$ vertices contains $\overrightarrow{P_k}$.

    Clearly, a homomorphism from a tournament into a loopless graph cannot map two vertices to the same vertex, and thus must be an isomorphism.

    To prove \textbf{(iii)}, observe that $\overrightarrow{F}$ contains $\overrightarrow{P_4}$, thus $z(\overrightarrow{F})\ge 4$. Consider now a tournament $\overrightarrow{T}$ on 4 vertices, by the above, we can assume that $\overrightarrow{v_1v_2}$, $\overrightarrow{v_2v_3}$, $\overrightarrow{v_3v_4}$ are in $\overrightarrow{T}$. If $\overrightarrow{v_1v_4}$ is also in $\overrightarrow{T}$, we are done, thus we can assume that $\overrightarrow{v_4v_1}$ is in $\overrightarrow{T}$, i.e., we have a $C_4$ with arcs oriented cyclically. Then, without loss of generality we have $\overrightarrow{v_1v_3}$ in $\overrightarrow{T}$. If $\overrightarrow{v_4v_2}$ is in $\overrightarrow{T}$, then $\overrightarrow{v_1v_3}$, $\overrightarrow{v_3v_4}$ and $\overrightarrow{v_4v_2}$ together with $\overrightarrow{v_1v_2}$ forms $\overrightarrow{F}$. Finally, if $\overrightarrow{v_2v_4}$ is in $\overrightarrow{T}$, then $\overrightarrow{v_2v_4}$, $\overrightarrow{v_4v_1}$ and $\overrightarrow{v_1v_3}$ together with $\overrightarrow{v_2v_3}$ forms an $\overrightarrow{F}$. Therefore, $\overrightarrow{T}$ contains $\overrightarrow{F}$, hence Proposition \ref{klikk} completes the proof.
\end{proof}

We say that an edge of a graph is \textit{color-critical} if deleting it decreases the chromatic number of the graph. A theorem of Simonovits \cite{sim2} states that if $F$ has chromatic number $k$ and a color-critical edge, then for sufficiently large $n$ we have $\ex(n,F)=|E(T(n,k-1))|$. We can extend the idea of Proposition \ref{klikk} in the case every orientation a graph of chromatic number $z(\overrightarrow{F})$ with a color-critical edge contains $\overrightarrow{F}$. The following result is an example of this.

\begin{proposition}\label{pro2.3}
Let $\overrightarrow{F}$ denote the 4-vertex oriented graph with arcs $\overrightarrow{v_1v_2},\overrightarrow{v_2v_3},\overrightarrow{v_4v_3}$. Then for sufficiently large $n$, $\exo(n,\overrightarrow{F})=\lfloor n^2/4\rfloor$.
\end{proposition}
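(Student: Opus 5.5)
We first note that the lower bound is immediate. Take $K_{\lfloor n/2\rfloor,\lceil n/2\rceil}$ with parts $A,B$ and orient every edge from $A$ to $B$. This oriented graph has no directed path on three vertices. Since $\overrightarrow{v_1v_2},\overrightarrow{v_2v_3}$ is such a path in $\overrightarrow{F}$, the graph is $\overrightarrow{F}$-free. The same observation gives $z(\overrightarrow{F})\ge 3$. For the upper bound I would follow the remark before the statement, namely use stability (Theorem \ref{stabi} with $z(\overrightarrow{F})=3$) together with a local argument around an edge inside a part. I first tried to find a single $3$-chromatic graph with a color-critical edge that contains $\overrightarrow{F}$ in every orientation. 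The book $K_2+\overline{K_m}$ fails: orient the spine $u\to v$ and every page $w$ with $u\to w$, $v\to w$. So the local argument must also use cross edges.

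\textbf{Reduction to minimum degree.} Suppose $\overrightarrow{G}$ is $\overrightarrow{F}$-free on $n$ vertices with $|E(G)|>\lfloor n^2/4\rfloor$. I would run the standard deletion argument: repeatedly remove a vertex of degree less than $(1/2-\varepsilon)$ times the current order. Each removal increases the excess over the bipartite Turán number. If too many vertices are removed, the remaining graph has more edges than it can have, using $\exo(m,\overrightarrow{F})\le(1/2+o(1))m^2/2$ from Theorem \ref{ValadkhanTHM}. Hence we may pass to an $\overrightarrow{F}$-free graph on $m\ge n/2$ vertices, still with more than $m^2/4$ edges, and with minimum degree at least $(1/2-\varepsilon)m$. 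By Theorem \ref{stabi} there is a partition $A\cup B$ with $o(m^2)$ edges inside the parts. Combining this with the minimum degree, and moving the few vertices with many neighbours in their own part, every vertex has at least $(1/2-2\varepsilon)m$ neighbours in the other part and $|A|,|B|=(1/2+o(1))m$. Since $|E(G)|>m^2/4$, some part, say $A$, contains an edge $x\to y$. Then $x$ and $y$ have a set $W\subseteq B$ of at least $(1/2-o(1))m$ common neighbours.

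\textbf{Local analysis.} Each $w\in W$ has one of four types according to the orientations of $xw,yw$. As in the book calculation, two pages of the transitive type ($x\to w\to y$), or two of the cyclic type ($y\to w\to x$), already give a copy of $\overrightarrow{F}$. For instance, $x\to w\to y\leftarrow w'$ is such a copy. So I will check all type combinations and discard $O(1)$ vertices, leaving $\Omega(m)$ pages of the ``both in'' type ($x\to w$, $y\to w$) or of the ``both out'' type; the second is symmetric after reversing all arcs and swapping roles, so I treat the first.

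Fix such a $w$. If some $z\in N_A(w)\setminus\{x,y\}$ has $z\to w$, then $x\to y\to w\leftarrow z$ is $\overrightarrow{F}$. Hence $w\to z$ for all such $z$, that is, for about $m/2$ vertices of $A$. Now suppose $w\ne w'$ are both-in pages and both send an arc to the same $z$. Then $x\to w\to z\leftarrow w'$ is $\overrightarrow{F}$, on four distinct vertices. Therefore each $z\in A$ receives an arc from at most one both-in page. But $\Omega(m)$ pages each send arcs to $(1/2-o(1))m$ vertices of $A$, which has only $(1/2+o(1))m$ vertices. This is a contradiction.

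\textbf{Main obstacle.} I expect the main difficulty to be the bookkeeping in the type analysis, namely making sure that every mixed combination of page types either directly yields $\overrightarrow{F}$ or leaves linearly many pages of a single type. The remaining steps (the deletion argument and applying stability) are routine.
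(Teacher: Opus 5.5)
Your local analysis is correct, but the reduction that feeds into it has a genuine gap. The claim that after Theorem~\ref{stabi} and ``moving the few vertices with many neighbours in their own part'', \emph{every} vertex has at least $(1/2-2\varepsilon)m$ neighbours in the other part does not follow. A vertex with about $m/4$ neighbours in each part cannot be repaired by moving it, and your degree and stability information does not exclude such vertices. For example, take $K_{(m-1)/2,(m-1)/2}$ plus one vertex joined to $(m+1)/2$ vertices split roughly evenly between the parts. This graph has minimum degree about $m/2$ and $\lfloor m^2/4\rfloor+1$ edges, and every edge inside a part is incident to the split vertex. So the arc $x\to y$ you pick inside $A$ may have atypical endpoints. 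If both endpoints are atypical with disjoint neighbourhoods in $B$, your set $W$ is empty. This is patchable: with a max-cut partition, an atypical vertex has linearly many typical neighbours in its own part and linearly many neighbours across. You can then choose the inside edge with a typical endpoint and use only typical pages. As written, however, the step fails.

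Separately, the both-out case is \emph{not} symmetric to the both-in case. $\overrightarrow{F}$ has a vertex of in-degree $2$ and its converse has none, so reversing all arcs turns the problem into a different one. You do not need this symmetry anyway. Two both-out pages $w,w'$ give $w\to x\to y\leftarrow w'$, and checking the remaining pairs of types (for instance a cyclic page $w$ and a transitive page $w'$ give $w\to x\to y\leftarrow w'$) shows that at most one page is not of both-in type.

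Once this is noticed, your argument needs no stability at all. Take $K_{3,3}$ with parts $\{x,y,z\}$ and $\{w_1,w_2,w_3\}$ and add the edge $xy$. In any orientation, say with $x\to y$, at least two of the $w_i$ are both-in pages. By your first observation both send an arc to $z$, and then $x\to w\to z\leftarrow w'$ is a copy of $\overrightarrow{F}$. The edge $xy$ is colour-critical in this $3$-chromatic graph, so Simonovits' theorem gives Tur\'an number $\lfloor n^2/4\rfloor$ for large $n$, and the upper bound follows.

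This is exactly the paper's proof, which uses $K_{2,4}$ plus an edge inside the part of size $4$ as the forbidden graph. Your book attempt failed only because the pages had no extra common neighbour such as $z$. The deletion, stability and max-cut steps you called routine, which is where the gap sits, can be dropped entirely. The type analysis you expected to be the main obstacle takes only a few lines.
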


\begin{proof}
The lower bound is given by the antidirected orientation on $T(n,2)$.

    Let $H$ denote the following graph. We take $K_{2,4}$ and add an edge $uv$ inside the part of size 4. Note that the additional edge is color-critical. We claim that every orientation of $H$ contains $\overrightarrow{F}$. Therefore, the underlying graph of an $\overrightarrow{F}$-free oriented graph is $H$-free, hence contains at most $\ex(n,H)$ edges, which is at most $\lfloor n^2/4\rfloor$ if $n$ is sufficiently large by the theorem of Simonovits \cite{sim2} mentioned above. 

    Let $x,y$ be the vertices in the part of size 2 in $H$ and $w$ be a vertex distinct from $u,v,x,y$. Consider the cycle $xuyvx$. It avoids $\overrightarrow{F}$ if and only if it is a directed cycle $\overrightarrow{C_4}$ or an antidirected cycle. If it is a directed cycle which contains, say, the arc $\overrightarrow{vx}$, then $xw$ and $yw$ are oriented towards $w$. Then $\overrightarrow{vx},\overrightarrow{xw},\overrightarrow{yw}$ form a copy of $\overrightarrow{F}$.

    Therefore, we can assume that $xuyvx$ is an antidirected cycle. In other words, these four vertices form $\overrightarrow{K_{2,2}}$. If $u$ and $v$ are the sources here, then no matter what the orientation of $uv$ is, $x$ and $y$ are both endpoints of copies of $\overrightarrow{P_3}$, thus the edges $xw$ and $yw$ are oriented towards $w$ and we find a copy of $\overrightarrow{F}$ as in the previous paragraph. Finally, if $u$ and $v$ are the sinks in the $\overrightarrow{K_{2,2}}$, then without loss of generality $\overrightarrow{uv}$ is an arc, and then $\overrightarrow{xu},\overrightarrow{uv},\overrightarrow{yv}$ form a copy of $\overrightarrow{F}$. This completes the proof.
\end{proof}

Let $\overrightarrow{F}$ denote the 4-vertex oriented graph with arcs $\overrightarrow{v_2v_1},\overrightarrow{v_2v_3},\overrightarrow{v_3v_4}$. We still have that $\exo(n,\overrightarrow{F})=\lfloor n^2/4\rfloor$  for sufficiently large $n$, as this case is equivalent to that in Proposition~\ref{pro2.3} by symmetry: we just turn every edge and relabel the vertices.


\begin{theorem}\label{star}
    \textbf{(i)} $\exo(n,\overrightarrow{S_{0,q}})=(q-1)n$.
    
    \textbf{(ii)} If $1\le p\le q$, then for sufficiently large $n$ we have $\exo(n,\overrightarrow{S_{p,q}})=(p-1)n+\left\lfloor\frac{(n+q-p)^{2}}{4}\right\rfloor$.
\end{theorem}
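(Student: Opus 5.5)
The proof rests on one observation. An oriented graph contains $\overrightarrow{S_{p,q}}$ if and only if some vertex has in-degree at least $p$ and out-degree at least $q$, because the leaves of the star can be chosen freely among the in- and out-neighbours. So $\overrightarrow{G}$ is $\overrightarrow{S_{p,q}}$-free exactly when every vertex $v$ satisfies $d^-(v)\le p-1$ or $d^+(v)\le q-1$. Both parts become degree-counting problems, plus an explicit construction.

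For \textbf{(i)}, freeness means $d^+(v)\le q-1$ for every $v$. Summing out-degrees gives at most $(q-1)n$ arcs. For the lower bound, take vertices $0,\dots,n-1$ and the circulant orientation $\overrightarrow{i\,(i+j)}$ for $j=1,\dots,q-1$, indices mod $n$. When $n\ge 2q-1$ these are distinct arcs with no antiparallel pair, and every out-degree is exactly $q-1$.

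For \textbf{(ii)}, the upper bound goes as follows.
\begin{itemize}
\item Let $A$ be the set of vertices with $d^-(v)\le p-1$ and $B=V\setminus A$, so every vertex of $B$ has $d^+(v)\le q-1$. Write $a=|A|$ and $b=|B|=n-a$.
\item Every arc either enters a vertex of $A$, or leaves a vertex of $B$, or goes from $A$ to $B$. Arcs from $B$ to $A$ fall into both of the first two classes, which only helps.
\item Hence the number of arcs is at most
\[ f(a)=(p-1)a+(q-1)(n-a)+a(n-a)=(q-1)n+a(n+p-q-a). \]
\item Over integers $a$, the product $a(m-a)$ with $m=n+p-q$ is at most $\lfloor m^2/4\rfloor$. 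So there are at most $(q-1)n+\lfloor (n+p-q)^2/4\rfloor$ arcs.
\item Set $d=q-p$. Then $(q-1)n+(n-d)^2/4=(p-1)n+(n+d)^2/4$, and $n-d$ and $n+d$ have the same parity, so the floors agree. This gives the stated value $(p-1)n+\lfloor (n+q-p)^2/4\rfloor$.
\end{itemize}
Note that $a$ could be $0$ or $n$; the bound still holds there, since it holds for all integers.

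For the lower bound, take $a=\lfloor (n+p-q)/2\rfloor$, which lies strictly between $0$ and $n$ and is large when $n$ is large. Build the graph in three pieces.
\begin{itemize}
\item Inside $A$, place a circulant orientation in which every vertex has in-degree exactly $p-1$. This needs $a\ge 2p-1$, which holds for large $n$.
\item Inside $B$, place a circulant orientation in which every out-degree is exactly $q-1$. This needs $n-a\ge 2q-1$, which also holds for large $n$.
\item Orient every edge between $A$ and $B$ from $A$ to $B$.
\end{itemize}
Each vertex of $A$ then has in-degree $p-1$, and each vertex of $B$ has out-degree $q-1$, so there is no $\overrightarrow{S_{p,q}}$. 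The arc count is $(p-1)a+(q-1)(n-a)+a(n-a)$, which matches the upper bound.

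The only delicate points are the rounding identity and checking that the circulant constructions fit, that is, $a\ge 2p-1$ and $n-a\ge 2q-1$. This is where "sufficiently large $n$" is used.
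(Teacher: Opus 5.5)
Your proof is correct. The construction is the same as the paper's, but your upper bound for (ii) takes a cleaner route.

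\textbf{How the paper argues.} It fixes an extremal graph and sets $C=\{v: d^+(v)\ge q\}$ and $D=\{v: d^-(v)\ge p\}$. These sets are disjoint, but there may be a third class of vertices with $d^+(v)\le q-1$ and $d^-(v)\le p-1$. The paper removes that class in two steps:
\begin{enumerate}
\item It shows $|D|\ge n/4-o(n)$ by counting in-degrees against the $\lfloor n^2/4\rfloor$ lower bound.
\item It rewires such a vertex $u$ so that $u$ sends arcs to all of $D$. This increases the arc count, contradicting extremality. The step needs $|D|>p+q-2$, hence large $n$.
\end{enumerate}
Only after this does it bound the arcs by $(p-1)|C|+(q-1)|D|+|C||D|$.

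\textbf{How your route differs.} Your single-threshold split (in-degree at most $p-1$ versus the rest) is a partition from the start. The low-degree vertices simply land in $A$, and the same bound follows with no extremal graph, no rewiring and no size estimate. As a by-product, your upper bound holds for every $n$, and ``sufficiently large'' is used only to fit the circulant pieces.

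\textbf{What each approach buys.} The paper's route yields a little structural information: in an extremal graph every vertex has $d^+\ge q$ or $d^-\ge p$. That information is not needed for the value.

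\textbf{Minor points.}
\begin{itemize}
\item Your $A,B$ play the roles of the paper's $C,D$ in the construction.
\item If you parametrize by $b=|B|$, you get $(p-1)n+b(n+q-p-b)$ directly. This is how the paper avoids your parity identity, though your identity is correct.
\item Your caveat $n\ge 2q-1$ in (i) is genuinely needed. For $n<2q-1$ one has $(q-1)n>\binom{n}{2}$, so (i) as stated also tacitly requires $n$ to be large.
\end{itemize}
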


\begin{proof}
    The upper bound in \textbf{(i)} is immediate, since at most $q-1$ arcs go in every vertex. The lower bound is obtained by the $(q-1)$st power of the cycle, i.e., we have vertices $v_1,\dots, v_n$, and $\overrightarrow{v_iv_j}$ is an arc if and only if $j-1$ is between 1 and $q-1$ modulo $i$.

    For \textbf{(ii)}, observe first that $z(\overrightarrow{S_{p,q}})=3$. Let $\overrightarrow{G}$ be an $\overrightarrow{S_{p,q}}$-free $n$-vertex graph with $\exo(n,\overrightarrow{S_{p,q}})$ arcs.
    Let $C$ denote the set of vertices with outdegree at least $q$ and $D$ denote the set of vertices with indegree at least $p$. Clearly $C$ and $D$ are disjoint, without loss of generality $|C|\le |D|$. The sum of the indegrees is $\exo(n,\overrightarrow{S_{p,q}})\ge \lfloor n^2/4\rfloor$, since the antidirected $T(n,2)$ is $\overrightarrow{S_{p,q}}$-free. 
As vertices not in $D$ have total indegree at most $(n-|D|)p$, the sum of the indegrees is at most $n|D|+O(n)$. Therefore, $|D|\ge n/4-o(n)$. If a vertex $u$ is not in $C\cup D$, then it is incident to at most $p+q-2$ arcs. We delete all the incident edges and add all the arcs $\overrightarrow{uv}$ for all $v\in D$. Then neither $u$, nor $v$ becomes a center of an $\overrightarrow{S_{p,q}}$ and the number of arcs increases, a contradiction.

    We obtained that $V(\overrightarrow{G})$ is partitioned to $C$ and $D$. Each vertex in $C$ has indegree at most $p-1$, thus there are at most $(p-1)|C|$ arcs inside $C$. Analogously, there are at most $(q-1)|D|$ arcs inside $D$, and clearly there are at most $|C||D|$ arcs between $C$ and $D$. Observe that we can have a construction matching this upper bound the following way. Let $u_1,\dots, u_{|C|}$ be the vertices of $C$ and $w_1,\dots, w_{|D|}$ be the vertices of $D$. We take each arc from $u_i$ to the next $p-1$ vertices $u_{i+1}, \dots, u_{i+p-1}$ modulo $|C|$, and we take analogously each arc from $w_j$ to the next $q-1$ vertices $w_{j+1}, \dots, w_{j+q-1}$ modulo $|D|$. Finally, we take each arc of the form $\overrightarrow{u_iw_j}$. Then clearly we have $(p-1)|C|$ arcs inside $C$, $(q-1)|D|$ arcs inside $D$, and $|C||D|$ arcs between $C$ and $D$. Each vertex has either in-degree at most $p-1$ or out-degree at most $q-1$, thus cannot be the center of an $\overrightarrow{S_{p,q}}$.

    It is left to pick the size of $C$ and $D$. Our bound is equal to $(p-1)n+|D|(n-|D|+q-p)$. This is maximal if $|D|=n-|D|+q-p$, completing the proof.
\end{proof}

\begin{proposition}
    Let $\overrightarrow{F}$ be the vertex-disjoint union of $\overrightarrow{F_1}$ and $\overrightarrow{F_2}$ with $z(\overrightarrow{F_1})<z(\overrightarrow{F_2})$. Then for sufficiently large $n$, we have $\exo(n,\overrightarrow{F})=\exo(n,\overrightarrow{F_2})$.
\end{proposition}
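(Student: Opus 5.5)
The plan is to prove the two inequalities separately. The lower bound uses only the fact that $\overrightarrow{F_2}$ is a subgraph of $\overrightarrow{F}$. The upper bound compares the quadratic densities given by Theorem~\ref{ValadkhanTHM}: deleting one copy of $\overrightarrow{F_2}$ leaves an $\overrightarrow{F_1}$-free graph, which is much sparser than an extremal $\overrightarrow{F_2}$-free graph.

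\emph{Lower bound.} Since $\overrightarrow{F_2}\subseteq \overrightarrow{F}$, every $\overrightarrow{F_2}$-free oriented graph is $\overrightarrow{F}$-free. Hence $\exo(n,\overrightarrow{F})\ge \exo(n,\overrightarrow{F_2})$ for every $n$.

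\emph{Quadratic lower bound for $\exo(n,\overrightarrow{F_2})$.} First note that $\overrightarrow{F_1}$ must be acyclic: otherwise its abstract chromatic number is $\infty$ and could not be smaller than that of $\overrightarrow{F_2}$. Write $z_i=z(\overrightarrow{F_i})$, so $z_1$ is finite. There are two cases for $\overrightarrow{F_2}$.
\begin{itemize}
\item If $\overrightarrow{F_2}$ contains a directed cycle, the transitive tournament shows $\exo(n,\overrightarrow{F_2})=\binom n2$.
\item Otherwise, by the definition of compressibility there is a tournament $\overrightarrow{T}$ on $z_2-1$ vertices admitting no homomorphism from $\overrightarrow{F_2}$. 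Blow up $\overrightarrow{T}$: take $T(n,z_2-1)$ and orient every edge between parts $i$ and $j$ as the arc between $i$ and $j$ in $\overrightarrow{T}$. Any copy of $\overrightarrow{F_2}$ in this blow-up would give a homomorphism to $\overrightarrow{T}$, so the blow-up is $\overrightarrow{F_2}$-free. Thus $\exo(n,\overrightarrow{F_2})\ge |E(T(n,z_2-1))|=\left(\frac{z_2-2}{z_2-1}-o(1)\right)\binom n2$.
\end{itemize}

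\emph{Upper bound.} Let $\overrightarrow{G}$ be an $n$-vertex $\overrightarrow{F}$-free oriented graph with $\exo(n,\overrightarrow{F})$ arcs. If $\overrightarrow{G}$ is $\overrightarrow{F_2}$-free we are done, so suppose it contains a copy of $\overrightarrow{F_2}$. Delete the $|V(\overrightarrow{F_2})|$ vertices of this copy; this removes at most $|V(\overrightarrow{F_2})|\,n=O(n)$ arcs. The remaining oriented graph must be $\overrightarrow{F_1}$-free, since an $\overrightarrow{F_1}$ there together with the deleted $\overrightarrow{F_2}$ would form $\overrightarrow{F}$. By Theorem~\ref{ValadkhanTHM} (which also covers $z_1=2$, giving $o(n^2)$), we get
\[
|E(\overrightarrow{G})|\le \left(\frac{z_1-2}{z_1-1}+o(1)\right)\binom n2+O(n).
\]
Since $z_1<z_2$ (including $z_2=\infty$), we have $\frac{z_1-2}{z_1-1}<\frac{z_2-2}{z_2-1}$, reading the right-hand side as $1$ when $z_2=\infty$. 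So for large $n$ this is strictly less than $\exo(n,\overrightarrow{F_2})\le \exo(n,\overrightarrow{F})=|E(\overrightarrow{G})|$, a contradiction. Hence $\overrightarrow{G}$ is $\overrightarrow{F_2}$-free and $\exo(n,\overrightarrow{F})\le\exo(n,\overrightarrow{F_2})$.

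The only delicate step is establishing the quadratic lower bound for $\exo(n,\overrightarrow{F_2})$ uniformly, including the cyclic case. Once a density gap of order $n^2$ is secured, the $O(n)$ loss from deleting one copy of $\overrightarrow{F_2}$ is negligible and the argument closes.
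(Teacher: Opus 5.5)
Your proposal is correct and is essentially the paper's argument. You delete one copy of $\overrightarrow{F_2}$ at a cost of $O(n)$ arcs, then use the $\Theta(n^2)$ gap between $\exo(n,\overrightarrow{F_1})$ and $\exo(n,\overrightarrow{F_2})$ coming from $z(\overrightarrow{F_1})<z(\overrightarrow{F_2})$ to force a copy of $\overrightarrow{F_1}$ in what remains. The only difference is that you justify that gap explicitly (a blow-up of a tournament on $z(\overrightarrow{F_2})-1$ vertices with no homomorphism from $\overrightarrow{F_2}$, the transitive tournament when $\overrightarrow{F_2}$ has a directed cycle, and Valadkhan's theorem for $\overrightarrow{F_1}$), whereas the paper asserts it in one line.
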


\begin{proof}
    Clearly  $\exo(n,\overrightarrow{F})\ge\exo(n,\overrightarrow{F_2})$. To see the other direction, assume that $\overrightarrow{G}$ is an $n$-vertex graph with more than $\exo(n,\overrightarrow{F_2})$ arcs. Then it contains a copy of $\overrightarrow{F_2}$. Deleting the vertices of that copy, we obtain a graph $\overrightarrow{G'}$ on $n-|V(\overrightarrow{F_2})|$ vertices and at least $\exo(n,\overrightarrow{F_2})-O(n)$ arcs. Observe that $\exo(n,\overrightarrow{F_2})\ge\exo(n,\overrightarrow{F_1})+\Theta(n^2)$ since $z(\overrightarrow{F_1})<z(\overrightarrow{F_2})$. Therefore, $\exo(n,\overrightarrow{F_2})-O(n)>\exo(n,\overrightarrow{F_1})\ge \exo(n-|V(\overrightarrow{F_2})|,\overrightarrow{F_1})$, thus
    we find a copy of $\overrightarrow{F_1}$ in $\overrightarrow{G'}$, hence a copy of $\overrightarrow{F}$ in $\overrightarrow{G}$.
\end{proof}


We note that all the orientations of a matching $M_k$ give isomorphic oriented graphs $\overrightarrow{M_k}$. Therefore, $\exo(n,\overrightarrow{M_k})=\ex(n,M_k)=(k-1)(n-k+1)+\binom{k-1}{2}$, using a theorem of Erd\H os and Gallai \cite{EGa}.

\begin{proposition}
    Let $\overrightarrow{F}$ be the antidirected 4-vertex path and $n>1$. Then $\exo(n,\overrightarrow{F})=2n-3$. 
\end{proposition}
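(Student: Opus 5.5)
The lower bound comes from an explicit construction, and the upper bound from induction on $n$ powered by a local analysis of which arcs a copy of $\overrightarrow{F}$ can use. We write $\overrightarrow{F}$ as $\overrightarrow{ab},\overrightarrow{cb},\overrightarrow{cd}$ on four distinct vertices; by reversing all arcs, the other labelling is the same graph. A copy of $\overrightarrow{F}$ in $\overrightarrow{G}$ therefore consists of a middle arc $\overrightarrow{cb}$ together with an in-neighbour $a\neq c$ of $b$ and an out-neighbour $d\neq b$ of $c$ satisfying $a\neq d$.

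\emph{Lower bound.} Take two vertices $x,y$ and let the remaining $n-2$ vertices be $w$. Add all arcs $\overrightarrow{xw}$ and $\overrightarrow{wy}$, and also the arc $\overrightarrow{yx}$. This gives $2(n-2)+1=2n-3$ arcs. Every arc $\overrightarrow{cb}$ has $d^-(b)=1$ or $d^+(c)=1$:
\begin{itemize}
\item for $\overrightarrow{xw}$, the head $w$ has in-degree $1$;
\item for $\overrightarrow{wy}$, the tail $w$ has out-degree $1$;
\item for $\overrightarrow{yx}$, both endpoints satisfy this.
\end{itemize}
So no arc can serve as the middle arc, and the construction is $\overrightarrow{F}$-free. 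The case $n=2$ (a single arc) is trivial.

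\emph{Upper bound: classifying arcs.} Let $\overrightarrow{G}$ be $\overrightarrow{F}$-free. An arc $\overrightarrow{cb}$ can fail to be a middle arc in only three ways:
\begin{enumerate}
\item $d^+(c)=1$;
\item $d^-(b)=1$;
\item $d^+(c)=d^-(b)=2$ and the unique extra out-neighbour of $c$ equals the unique extra in-neighbour of $b$.
\end{enumerate}
In case (3) the vertices $c,a,b$ span a transitive triangle $\overrightarrow{ca},\overrightarrow{ab},\overrightarrow{cb}$. Charging case-(1) arcs to their tail and case-(2) arcs to their head, each vertex is charged at most twice: once as the tail of its unique out-arc and once as the head of its unique in-arc. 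The real work is handling case (3), which can happen only in a very restricted configuration.

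\emph{Upper bound: induction.} We prove $|E(\overrightarrow{G})|\le 2n-3$ for $n\ge 2$ by induction, deleting a vertex of total degree at most $2$ when one exists. For the inductive step it therefore suffices to show that if every vertex has total degree at least $3$, then $\overrightarrow{G}$ contains $\overrightarrow{F}$ (or $n$ is tiny and can be checked by hand).

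\emph{Main step: minimum degree at least $3$.} Suppose every vertex has total degree at least $3$, and take any arc $\overrightarrow{cb}$ that is not a middle arc.
\begin{itemize}
\item If $d^+(c)=1$, then $c$ has at least two in-neighbours $a_1,a_2$. Consider each arc $\overrightarrow{a_ic}$: its tail $a_i$ has another out-neighbour $c$'s partner $b$ available, so we look at whether $a_i$ has out-degree $1$. If it does not, then $a_i$ has a second out-neighbour $d$, and $\overrightarrow{a_{3-i}c},\overrightarrow{a_ic},\overrightarrow{a_id}$ is a copy of $\overrightarrow{F}$ with middle arc $\overrightarrow{a_ic}$, provided $d\neq a_{3-i}$.
\item If $d^-(b)=1$, the symmetric argument applies to the out-neighbours of $b$.
\item Case (3) is handled by the same style of argument starting from the triangle $c,a,b$, using the extra degree at each of its vertices.
\end{itemize}
The main obstacle is the coincidence cases, where the new witness $d$ equals an earlier vertex, or where a triangle is forced. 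I expect to resolve these by choosing the starting arc carefully, for instance an arc whose tail has maximum out-degree, and following short chains until either $\overrightarrow{F}$ appears or the structure closes up on at most about $4$ vertices. On such small vertex sets the degree condition cannot hold, or the bound $2n-3$ can be verified directly.
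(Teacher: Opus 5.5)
Your lower-bound construction, your three-way classification of arcs, and the reduction to minimum total degree at least $3$ (delete a vertex of degree at most $2$ and induct from $n=2$) are all correct. What is missing is the step that carries the whole upper bound: you never show that minimum total degree at least $3$ forces $\overrightarrow{F}$. Your bullets only settle the subcase where some in-neighbour $a_i$ of $c$ has a second out-neighbour $d\neq a_{3-i}$. Case (3) and the coincidence cases are left as an expectation.

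The proposed mechanism also cannot work: following short chains until $\overrightarrow{F}$ appears or the structure closes up on about $4$ vertices fails because the obstruction is not local. If $a_1,a_2$ both have $c$ as their only out-neighbour, you find nothing, and this can persist. A complete in-branching of large depth in which every non-leaf vertex has exactly three in-neighbours is $\overrightarrow{F}$-free, since every arc has a tail of out-degree $1$. It also has all total degrees at least $3$ except at the leaves. So no argument that inspects a bounded neighbourhood of the starting arc can reach a contradiction; you need a count over the whole graph.

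That count is short and uses exactly your classification.

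\emph{Step 1: no type-(3) arcs when $\delta\ge 3$.} Suppose $\overrightarrow{cb}$ is of type (3), with $\overrightarrow{ca},\overrightarrow{ab}$ present. Then $\overrightarrow{ca}$ is not of type (1), since $d^+(c)=2$. It is not of type (3) either, since that would force $N^-(a)=\{c,b\}$, i.e.\ the arc $\overrightarrow{ba}$. Hence $d^-(a)=1$. Symmetrically, $\overrightarrow{ab}$ must be of type (1), so $d^+(a)=1$, and $a$ has total degree $2$.

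\emph{Step 2: counting.} Every arc then has its tail in $O=\{v: d^+(v)=1\}$ or its head in $I=\{v: d^-(v)=1\}$. Your charging therefore gives $|E(\overrightarrow{G})|\le |O|+|I|$. Moreover $O\cap I=\emptyset$, because a vertex in both has degree $2$. Thus $|E(\overrightarrow{G})|\le n<3n/2\le |E(\overrightarrow{G})|$, a contradiction, and your induction closes.

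For comparison, the paper does not induct. It counts directly, using the sets of vertices of out-degree $1$ and of out-degree at least $2$ together with their out-neighbourhoods. Along the way it asserts that every out-neighbour of a vertex of out-degree at least $2$ has in-degree $1$. That assertion fails in your case (3); a transitive triangle already violates it. So your attention to case (3) is warranted, and Step 1 above is what disposes of it.
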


\begin{proof}
For the lower bound, consider an oriented graph with an arc $\overrightarrow{vu}$ such that $\overrightarrow{uw}$ and $\overrightarrow{wv}$ are arcs for each of the other $n-2$ vertices $w$. Then such a $w$ cannot be a middle vertex of an antidirected $P_4$, thus $vu$ must be the middle arc, but then another arc of this path is $uw$, thus it is not an antidirected path, a contradiction.

For the upper bound, the statement is trivial for $n\le 3$. let $A$ denote the set of vertices with one outgoing arc and $B$ denote the set of vertices with more outgoing arcs. Let $A'$ denote the set of vertices that are out-neighbors of vertices
 in $A$ and $B'$ denote the set of vertices that are out-neighbors of vertices in $B$. Observe that if $B =\emptyset$, then there are at most $|A|\le n\le 2n-3$ arcs. If $u\in B$, then its out-neighbors have only one incoming arc each. Therefore, there are at most $|B'|$ arcs going to $B'$, and at most $|A|$ arcs going to $A'$, altogether at most $|A|+|B'|$ arcs. If $A=\emptyset$, we are done; otherwise $A'$ is not empty, and since $A'\cap B'=\emptyset$, we have $|B'|\le n-1$, and since $B$ is not empty, we have $|A|\le n-1$. 

We are done unless $|A|=|B'|\ge n-1$. 
In that case $|B|=1$, thus $A=B'$. This means that there is an arc from $u\in B$ to each vertex in $A$, thus no arc goes between vertices of $A$. Therefore, each arc from $A$ goes to $u$, hence 
there are arcs $\overrightarrow{uv}$ and $\overrightarrow{vu}$, a contradiction. 
\end{proof}

\begin{proposition}
    Let $\overrightarrow{F}$ consist of an antidirected $P_3$ and an additional independent arc. Then $\exo(n,\overrightarrow{F})=2n-3$ if $n$ is sufficiently large. 
\end{proposition}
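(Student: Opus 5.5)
We may assume by symmetry that the antidirected $P_3$ of $\overrightarrow{F}$ has its middle vertex as a source: reversing every arc swaps the two cases and preserves arc counts. So $\overrightarrow{F}$ is an out-cherry $\overrightarrow{cx},\overrightarrow{cy}$ together with a vertex-disjoint arc $\overrightarrow{ab}$.

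\textbf{Lower bound.} Take two vertices $u,v$ with the arc $\overrightarrow{uv}$, and add the arcs $\overrightarrow{wu}$ and $\overrightarrow{wv}$ for each of the other $n-2$ vertices $w$. This gives $2(n-2)+1=2n-3$ arcs. The out-degree of $u$ is $1$ and that of $v$ is $0$. Hence every out-cherry has a centre $w\notin\{u,v\}$, and its two leaves must be exactly $u$ and $v$. Every arc meets $\{u,v\}$, so no arc is disjoint from such a cherry, and the graph is $\overrightarrow{F}$-free.

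\textbf{Upper bound.} Let $\overrightarrow{G}$ be $\overrightarrow{F}$-free on $n$ vertices with at least $2n-2$ arcs; we aim for a contradiction. The key reformulation is that for every out-cherry $c\to x,y$, every arc of $\overrightarrow{G}$ meets $S=\{c,x,y\}$.

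\emph{Step 1: bounded out-degree.} If some $c$ has out-degree at least $5$, then since $c$ meets at most $n-1$ arcs, some arc $\overrightarrow{ab}$ avoids $c$. At least three out-neighbours of $c$ lie outside $\{a,b\}$, so we find $\overrightarrow{F}$. Hence every out-degree is at most $4$.

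\emph{Step 2: many cherry centres.} The out-degrees sum to at least $2n-2$ and each is at most $4$. Therefore at least $(n-2)/3$ vertices have out-degree at least $2$, i.e.\ are centres of cherries.

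\emph{Step 3: fix one cherry and count.} Fix a cherry with vertex set $S=\{c,x,y\}$. All arcs meet $S$, so
\[
e(\overrightarrow{G})\le 3+\sum_{w\notin S} e(w,S),
\]
where $e(w,S)$ is the number of arcs between $w$ and $S$, which is at most $3$. To reach $2n-2$, many vertices $w\notin S$ must have at least two arcs to $S$, and the arcs are already forced into a near-bipartite shape between $S$ and the rest.

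\emph{Step 4: use further cherries.} Apply the same reformulation to other cherries. If $w\notin S$ sends two arcs into $S$, say $w\to s_1,s_2$, then this cherry has vertex set $\{w,s_1,s_2\}$. Every arc must meet it, so every arc touching the third vertex $s_3$ but avoiding $s_1,s_2$ must go to or from $w$. Since there are linearly many such $w$ (for $n$ large), this forces $s_3$ to be incident to almost no arcs except those inside $S$. Then every other vertex contributes at most $2$ arcs (to $s_1,s_2$), and the arcs inside $\{s_1,s_2\}$ can contribute at most $1$ because $\overrightarrow{G}$ is an oriented graph. Summing gives at most $2(n-2)+1=2n-3$ arcs, a contradiction.

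The remaining cases must also be handled:
\begin{itemize}
\item the vertices $w$ with at least two arcs to $S$ use in-arcs from $S$ rather than out-arcs into $S$ (then the centre of the cherry lies in $S$, which needs a separate use of Step 1);
\item which pair $\{s_1,s_2\}\subseteq S$ is used varies with $w$.
\end{itemize}
I expect this final counting to be the main obstacle: showing that the linearly many cherries force all but $O(1)$ arcs to go into a single fixed pair, with the $O(1)$ exceptional arcs absorbed because $n$ is large. I would first try pigeonholing over the three possible pairs of $S$ to obtain one pair $\{s_1,s_2\}$ used by linearly many $w$, and then derive that structure from it.
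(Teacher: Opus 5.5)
Your reduction to out-cherries, the lower-bound construction, Step~1, and the observation that every arc must meet every cherry are all correct. However, the upper bound is not actually proved. Steps~3 and~4 are heuristic, and the closing strategy you propose cannot give an exact result. That strategy is that all but $O(1)$ arcs go into a fixed pair, ``with the $O(1)$ exceptional arcs absorbed because $n$ is large.'' But any surplus of arcs avoiding the pair pushes the count above $2(n-2)+1=2n-3$, however large $n$ is. What you need is that there are \emph{no} exceptional arcs. For that you do not need linearly many centres on a pair, only two.

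Here is how to close it with your own tools.
\begin{itemize}
\item Every arc meets $S$, so $\sum_{w\notin S}e(w,S)\ge 2n-5$ with each term at most $3$. Hence at least $(n-2)/2$ vertices $w\notin S$ have $e(w,S)\ge 2$.
\item By Step~1 at most $12$ arcs leave $S$. So all but at most $12$ of these $w$ send at least two out-arcs into $S$; this settles your first bullet.
\item Pigeonholing over the three pairs of $S$ gives, for $n\ge 34$, distinct $w,w'\notin S$ with $w\to s_1,s_2$ and $w'\to s_1,s_2$; this settles your second bullet.
\item An arc at $s_3$ avoiding $\{s_1,s_2\}$ would have to end at $w$ (to meet $\{w,s_1,s_2\}$) and also at $w'$ (to meet $\{w',s_1,s_2\}$), which is impossible. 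So $s_3$ has no neighbours outside $\{s_1,s_2\}$.
\item Every arc meets $S$, hence meets $\{s_1,s_2\}$. Therefore $e(\overrightarrow{G})\le 2(n-2)+1=2n-3$, contradicting $e(\overrightarrow{G})\ge 2n-2$.
\end{itemize}

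Completed this way, your route differs from the paper's. The paper takes two cherries with distinct vertex sets $U,U'$ and observes that arcs not inside $U\cup U'$ meet $U\cap U'$. It bounds the cases $|U\cap U'|=0,1,2$ separately. It then needs an extra argument to improve $2n-2$ to $2n-3$: it finds a cherry among the arcs at $U\cap U'$ that is disjoint from the arc between the other two vertices. Your three cherries sharing a pair reach $2n-3$ directly, at the cost of the out-degree bound and a larger threshold on $n$.
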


\begin{proof}
        Without loss of generality, assume that both arcs of the $P_3$ go to the middle vertex. For the lower bound,  consider an oriented graph with an arc $\overrightarrow{vu}$ such that $\overrightarrow{uw}$ and $\overrightarrow{vw}$ are arcs for each of the other $n-2$ vertices $w$. Then the copies of our antidirected $P_3$ consist of the arcs $\overrightarrow{uw}$, $\overrightarrow{vw}$ for some $w$, and there is no arc independent from them.

        For the upper bound, we are done unless there is a vertex with two incoming arcs. If these arcs are $\overrightarrow{uw}$, $\overrightarrow{vw}$, then other arcs contain at least one of $u,v,w$. If 
        $w$ is the only vertex of in-degree more than 1, then at most $n-1$ arcs go to vertices other than $w$. If at most 3 arcs go to $w$, then there are at most $n+2$ arcs altogether and we are done. 
        Otherwise each arc must be incident to $w$ and we are done. 


       Therefore, we can assume that there is another copy of our antidirected $P_3$. Let $U=\{u,v,w\}$ and $U'$ be the vertex set of another antidirected $P_3$.
       Then each arc intersects $U\cap U'$, except potentially the arcs inside these vertices. This means that for any vertex $x\not\in U\cup U'$, $x$ is only adjacent to vertices in $U\cap U'$. Clearly we find $\overrightarrow{F}$ if $|U\cap U'|=0$, and $|U\cap U'|\le 2$. We obtain the upper bound the upper bound $10+n-5$ if $|U\cap U'|=1$ and the upper bound $6+2(n-4)=2n-2$ if $|U\cap U'|=2$. We are done unless $|U\cap U'|=2$, let $x,y$ be the vertices in $U\cap U'$ and $x',y'$ be the two vertices in the symmetric difference of $U$ and $U'$. Then the arc between $x'$ and $y'$ is disjoint from the $2(n-4)$ arcs that are not inside $U\cup U'$ (since they are incident to $x$ or $y$). There is a vertex with two incoming arcs among those $2(n-4)$ arcs. These two arcs with the arc between $x'$ and $y'$ form $\overrightarrow{F}$, a contradiction, showing that the upper bound $2n-2$ is not sharp, thus there are at most $2n-3$ arcs.
\end{proof}

The above results, together with the fact that $z(\overrightarrow{T_3})=4$ 
complete the determination of all exact values of $\exo(n,\overrightarrow{F})$ for each oriented graph $\overrightarrow{F}$ with at most 3 arcs and sufficiently large $n$. 

\section{Stability}

Recall that Theorem \ref{stabi} states that an almost extremal graph is close to the Turán graph, but does not say anything about the orientation of the edges. It cannot say anything in general, since if $z(\overrightarrow{F})=\chi(F)$, then any orientation of $T(n,z(\overrightarrow{F})-1)$ is $\overrightarrow{F}$-free. Another problem is that multiple tournaments of order $z(\overrightarrow{F})-1$ may have the property that there is no homomorphism from $\overrightarrow{F}$ to them, which gives multiple $\overrightarrow{F}$-free orientations of $T(n,z(\overrightarrow{F})-1)$. Yet we can say something about the orientation of the Tur\'an graph in a special case.

\begin{theorem}\label{stab3}
    If $\overrightarrow{G}$ is an $\overrightarrow{P_3(t)}$-free oriented graph with at least $n^2/4-o(n^2)$ arcs, then we can turn $\overrightarrow{G}$ to the antidirected $K_{\lfloor n/2\rfloor,\lceil n/2\rceil}$ by deleting, adding and reorienting $o(n^2)$ arcs.
\end{theorem}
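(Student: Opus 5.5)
The plan is to first show that almost every vertex of $\overrightarrow{G}$ behaves like a source or like a sink, and then count arcs. Here $\overrightarrow{P_3(t)}$ is the blow-up of $\overrightarrow{P_3}$: three classes $X,Y,Z$ of size $t$, with all arcs from $X$ to $Y$ and from $Y$ to $Z$.

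\textbf{Step 1 (few directed $P_3$'s).} I claim that the number of directed paths $\overrightarrow{xv},\overrightarrow{vy}$ in $\overrightarrow{G}$ is $o(n^3)$, i.e.
\[\sum_{v} d^-(v)d^+(v)=o(n^3).\]
Suppose instead there are at least $\varepsilon n^3$ such paths. Take a uniformly random partition $V=V_1\cup V_2\cup V_3$. In expectation, $\varepsilon n^3/27$ of the paths have $x\in V_1$, $v\in V_2$, $y\in V_3$, so we may fix a partition achieving this. These triples form a $3$-partite $3$-uniform hypergraph with $\Omega(n^3)$ edges. By Erd\H{o}s' theorem on complete $3$-partite $3$-graphs (the hypergraph K\H{o}v\'ari--S\'os--Tur\'an bound), it contains $K^{(3)}_{t,t,t}$ with classes $X\subseteq V_1$, $Y\subseteq V_2$, $Z\subseteq V_3$ for $n$ large. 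Each triple $(x,v,y)\in X\times Y\times Z$ is a directed path, so all arcs go from $X$ to $Y$ and from $Y$ to $Z$. This is a copy of $\overrightarrow{P_3(t)}$, a contradiction. I expect this step to be the main obstacle; the only care needed is making the classes disjoint, which the random partition handles.

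\textbf{Step 2 (classifying vertices).} By Step 1, all but $o(n)$ vertices $v$ satisfy $\min(d^-(v),d^+(v))=o(n)$. Call such a vertex \emph{source-like} if $d^-(v)=o(n)$, and \emph{sink-like} otherwise; let $S$ and $T$ be the two sets. Let $R$ be the $o(n)$ remaining vertices; they meet only $o(n^2)$ arcs, which we delete. Every arc inside $S$ enters a source-like vertex, so there are at most $|S|\cdot o(n)=o(n^2)$ of them, and similarly inside $T$. Every arc from $T$ to $S$ also enters a source-like vertex, so there are $o(n^2)$ of those too. Hence all but $o(n^2)$ arcs go from $S$ to $T$, and
\[|E(\overrightarrow{G})|\le |S||T|+o(n^2).\]

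\textbf{Step 3 (balancing and editing).} Combining with $|E(\overrightarrow{G})|\ge n^2/4-o(n^2)$ gives $|S||T|\ge n^2/4-o(n^2)$. Together with $|S|+|T|\le n$, this forces $|S|,|T|=n/2\pm o(n)$. Moreover all but $o(n^2)$ of the pairs in $S\times T$ are arcs oriented from $S$ to $T$.

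Now edit $\overrightarrow{G}$. Delete the $o(n^2)$ arcs not going from $S$ to $T$ (this includes the arcs at $R$). Distribute $R$ and $o(n)$ vertices of the larger side so that the parts have sizes $\lfloor n/2\rfloor$ and $\lceil n/2\rceil$. Finally, add the missing $S\to T$ arcs. The result is the antidirected $K_{\lfloor n/2\rfloor,\lceil n/2\rceil}$, and the total number of deleted, added and reoriented arcs is $o(n^2)$.

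Theorem \ref{stabi} is not needed here. It could be used to locate the bipartition first, but the degree-product argument already produces it directly.
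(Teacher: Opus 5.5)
Your proof is correct, and it takes a genuinely different route from the paper's.

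\textbf{The paper's route.} The paper first invokes the unoriented stability result, Theorem~\ref{stabi}, which comes from the abstract-chromatic-number framework. This gives a near-balanced bipartition $U\cup W$ of the underlying graph, and only then is the orientation analysed. The authors define out-sets and in-sets: subsets of one part with at least $2t$ common out-neighbours, respectively in-neighbours, in the other part. A set of at least $t$ vertices that is both yields $\overrightarrow{P_3(t)}$. They then double count copies of $\overrightarrow{S_{2t,0}}$ and $\overrightarrow{S_{0,2t}}$ with leaves in $U$, with a K\H{o}v\'ari--S\'os--Tur\'an step producing a $K_{2t,2t}$. This shows that only $o(n^2)$ arcs can go from $W$ to $U$.

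\textbf{Your route.} You apply Erd\H{o}s's theorem on $K^{(3)}(t,t,t)$ to a random $3$-partition. This gives the supersaturation statement $\sum_v d^-(v)d^+(v)=o(n^3)$ directly. The bipartition and the orientation then come out together from an elementary degree count, so Theorem~\ref{stabi} is not needed at all.

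\textbf{Comparison.} Your argument is shorter, self-contained and more transparent. Its first step is the standard ``blow-up-free implies few copies'' mechanism and works verbatim for blow-ups of any oriented graph. The price is a hypergraph Tur\'an input. The paper's argument needs only graph-level counting on top of the general stability theorem. It is also set up as a template for recovering the orientation once the unoriented structure is known.

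The bookkeeping you leave implicit is routine. First, choose the $o(n)$ threshold uniformly, e.g.\ $\delta=\eta^{1/3}$ when $\sum_v d^-(v)d^+(v)=\eta n^3$. Second, note that a $K^{(3)}(t,t,t)$ inside a $3$-partite $3$-graph has its three classes in distinct parts, so they can be labelled $X\subseteq V_1$, $Y\subseteq V_2$, $Z\subseteq V_3$ as you need.
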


We note that we cannot extend this to every $\overrightarrow{F}$ with $z(\overrightarrow{F})=3$ and $\chi(F)=2$. Consider $K_{\lfloor n/2\rfloor,\lceil n/2\rceil}$ with the orientation such that one of the parts is partitioned to a set of vertices with in-degree 0 and a set of vertices with out-degree 0 (an unbalanced blow-up of $\overrightarrow{P_3}$). Then it does not contain $\overrightarrow{F}$ that is not a subgraph of any $\overrightarrow{P_3(t)}$, for example $\overrightarrow{F}$ with arcs $\overrightarrow{u_1u_2}$, $\overrightarrow{u_2u}$, $\overrightarrow{vu}$, 
$\overrightarrow{vv_1}$, $\overrightarrow{v_1v_2}$.

\begin{proof}
   By Theorem \ref{stabi}, we can turn the underlying graph $G$ to $K_{\lfloor n/2\rfloor,\lceil n/2\rceil}$ by deleting and adding  $o(n^2)$ edges, let $U$ and $W$ be the parts. We say that a set $U'\subset U$ is an \textit{out-set} if $|N_W^+(U')|\ge 2t$, and an \textit{in-set} if $|N_W^-(U')|\ge 2t$. We define these analogously for subsets of $W$. Observe that if $U'$ is both an out-set and an in-set and contains at least $t$ vertices, then we can pick $t$ common out-neighbors and $t$ common in-neighbors disjointly, which gives a $\overrightarrow{P_3(t)}$, a contradiction.

   Let us consider now the number of copies of $\overrightarrow{S_{2t,0}}$, where the $2t$ leaves are in $U$. On the one hand, this is equal to $\sum_{v\in W}\binom{d^-(v)}{2t}$. If there are quadratic many arcs going from $U$ to $W$, then this is $\Theta(n^{2t+1})$, while if subquadratic many arcs go from $U$ to $W$, then we are done. On the other hand, the number of such copies of $\overrightarrow{S_{2t,0}}$ is obtained by summing for $2t$-sets in $U$ the common out-neighborhood. Therefore, we are done unless there is a $2t$-set $U'$ with linear common out-neighborhood $W'$. We claim that $o(n^2)$ arcs go from $W'$ to $U$. Indeed, otherwise there is a $K_{2t,2t}$ using such edges. The part of that $K_{2t,2t}$ inside $W'$ is an in-set and an out-set, a contradiction.

   We have obtained that all but $o(n^2)$ pairs $u\in U$, $w\in W'$ have that $\overrightarrow{uw}$ is an arc in $G$. Let $U''$ denote the set of vertices in $U$ that have all but $o(n)$ vertices of $W'$ are outneighbors, then $|U''|=|U|-o(n)$. Observe that every $2t$-subset of $U''$ is an out-set. This implies that there are $o(n^{2t})$ $2t$-subsets of $U$ that are not out-sets, in particular there are $o(n^{2t})$ in-sets in $U$.

   Let us consider now the copies of $\overrightarrow{S_{0,2t}}$, where the $2t$ leaves are in $U$. On the one hand, there are $\sum_{v\in W}\binom{d^+(v)}{2t}$ copies. If there are quadratic many arcs going from $W$ to $U$, then this is $\Theta(n^{2t+1})$, while if subquadratic many arcs go from 
$W$ to $U$, then we are done.  On the other hand, the number of such copies of $\overrightarrow{S_{0,2t}}$ is obtained by summing for $2t$-sets in $U$ the common in-neighborhood. There are $o(n^{2t})$ $2t$-sets in $U$ that are in-sets, they have $O(n)$ common neighbors, and $O(n^{2t})$
 $2t$-sets in $U$ that are not in-sets, they have $O(1)$ common neighbors. Summarizing, the number of copies of such stars is $o(n^{2t+1})$, while we have obtained that it is $\Theta(n^{2t+1})$, a contradiction completing the proof.

\end{proof}

\begin{theorem}\label{thm3.2}
    Let $\overrightarrow{F}$ consist of the arcs $\overrightarrow{xy_i}$, $\overrightarrow{y_iz}$ for $i\le 2$. Then for sufficiently large $n$, we have $\exo(n,\overrightarrow{F})=\lfloor n^2/4\rfloor+\lceil n/2\rceil$.
\end{theorem}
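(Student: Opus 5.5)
The proof has two parts: a construction for the lower bound, and a stability argument, via Theorem~\ref{stab3}, for the upper bound. The key observation is that $\overrightarrow{F}$ consists of two internally disjoint directed paths $x\to y_i\to z$. Hence $\overrightarrow{F}$ is a subgraph of $\overrightarrow{P_3(2)}$ (blow up the middle vertex to $\{y_1,y_2\}$), so Theorem~\ref{stab3} applies with $t=2$.

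\textbf{Lower bound.} Take the antidirected $K_{\lfloor n/2\rfloor,\lceil n/2\rceil}$ with source part $U$, $|U|=\lfloor n/2\rfloor$, and sink part $W$, $|W|=\lceil n/2\rceil$. Add a directed Hamiltonian cycle on $W$; this has $\lceil n/2\rceil$ arcs since $|W|\ge 3$.
\begin{itemize}
\item Vertices of $U$ have in-degree $0$, so neither $y_i$ nor $z$ lies in $U$.
\item Therefore $y_1,y_2,z\in W$, so $z$ would have two in-neighbours inside $W$. This is impossible, because every vertex has in-degree $1$ within $W$.
\end{itemize}
So the graph is $\overrightarrow{F}$-free with $\lfloor n^2/4\rfloor+\lceil n/2\rceil$ arcs.

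The same idea shows why one cannot add extra arcs on both sides. If $u_1\to u_2$ in $U$ and $w'\to w''$ in $W$, then $u_1\to u_2\to w''$ and $u_1\to w'\to w''$ form $\overrightarrow{F}$.

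\textbf{Upper bound.} Let $\overrightarrow{G}$ be an extremal graph, so it has at least $n^2/4$ arcs.
\begin{enumerate}
\item \emph{Minimum degree.} First reduce to the case of minimum degree at least $(1/2-\varepsilon)n$ by the usual deletion argument. If a vertex has smaller degree, delete it and apply induction. The gap between $\exo(n,\cdot)$ and $\exo(n-1,\cdot)$ (about $n/2$) shows that such deletions cannot happen too often, so after $o(n)$ steps we would exceed the trivial bound.
\item \emph{Stability and cleaning.} Theorem~\ref{stab3} then gives a partition $U\cup W$ such that all but $o(n^2)$ of the pairs $u\in U$, $w\in W$ are arcs $\overrightarrow{uw}$. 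Reassign vertices to the side of their majority behaviour. Using the minimum degree, show that every vertex $u\in U$ sends arcs to all but $o(n)$ vertices of $W$, and every $w\in W$ receives arcs from all but $o(n)$ vertices of $U$.
\item \emph{Local structure.}
\begin{itemize}
\item No arc goes from $W$ to $U$. If $\overrightarrow{wu}$ is an arc, then $w$ has many in-neighbours $u'$, and $u$ has many out-neighbours $w'$. Pick $u'$ and a common out-neighbour $w''$ of $u'$ and $u$ with $w''\ne w$. Then $u'\to w\to u$ and $u'\to w''\to\cdots$ can be completed to $\overrightarrow{F}$; more directly, take $z=w''$ with $u'\to u$ impossible, so one has to choose the configuration carefully.
\item Every $w\in W$ has at most one in-neighbour in $W$. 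Otherwise, a common in-neighbour $x\in U$ of those two in-neighbours (many exist) gives $\overrightarrow{F}$.
\item Symmetrically, every $u\in U$ has at most one out-neighbour in $U$.
\item Arcs inside $U$ and arcs inside $W$ cannot coexist, by the configuration $u_1\to u_2\to w''$, $u_1\to w'\to w''$ above. Here we need only that $u_2$ and $u_1$ both see $w''$, which holds for all but $o(n)$ choices.
\end{itemize}
\end{enumerate}

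From step 3, the extra arcs number at most $\max(|U|,|W|)$, so
\[
e(\overrightarrow{G})\le |U||W|+\max(|U|,|W|)\le \lfloor n^2/4\rfloor+\lceil n/2\rceil ,
\]
with the last inequality by optimizing the part sizes.

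\textbf{Main obstacle.} The delicate step is making step 3 exact rather than approximate. The $o(n)$ exceptional non-neighbours could in principle allow a few extra arcs locally. I would handle this with the standard trade-off: a vertex that violates the structure must miss $\Omega(n)$ cross pairs, which is more than the $O(1)$ it gains. This requires checking each forbidden configuration ($W\to U$ arcs, in-degree $2$ inside $W$, coexisting arcs inside $U$ and inside $W$) against common neighbourhoods of linear size.
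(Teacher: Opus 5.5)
Your lower bound is correct, and your in-degree argument for $\overrightarrow{F}$-freeness is clean. The upper bound has a genuine gap at exactly the step you flag, and the ``$\Omega(n)$ lost versus $O(1)$ gained'' trade-off does not close it, because several structural claims in step~3 are false as exact statements. First, ``no arc goes from $W$ to $U$'' is false. Reversing a single cross arc of the antidirected $K_{U,W}$ creates no copy of $\overrightarrow{F}$, since the reversed arc's tail has out-degree~$1$ and its head has in-degree~$1$. So backward arcs must be counted, not excluded, and your own sketch of this step breaks off. Second, arcs inside $U$ and inside $W$ can coexist. Add $\overrightarrow{u_1u_2}$ and $\overrightarrow{w_1w_2}$ to the antidirected $K_{U,W}$ and delete only $\overrightarrow{u_2w_2}$: the result is $\overrightarrow{F}$-free with $|U||W|+1$ arcs. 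In general an arc $\overrightarrow{uu'}$ inside $U$ and an arc $\overrightarrow{ww'}$ inside $W$ only force one of the two specific cross arcs $\overrightarrow{uw}$, $\overrightarrow{u'w'}$ to be absent. One absent arc (say $\overrightarrow{u'w'}$) can serve every arc into $u'$, so no vertex is charged $\Omega(n)$. What you actually need is an exact inequality: the number of cross pairs spanning no arc in either direction is at least $\min\{a,b\}$, where $a,b$ are the numbers of arcs inside the two parts. This is what the paper's ``good pairs'' counting (Cases~2.1--2.2) proves, while also charging the backward arcs.

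Third, step~2 does not follow from the minimum degree. Take a vertex with about $n/4$ in-neighbours in $U$, about $n/4$ out-neighbours in $W$, and no other arcs. It creates no copy of $\overrightarrow{F}$ in the antidirected $K_{U,W}$, has degree about $n/2$, and belongs to neither side. Whichever side you put it on, it misses $\Omega(n)$ cross arcs but compensates with $\Theta(n)$ arcs inside that part, so again the trade-off is not $\Omega(n)$ versus $O(1)$. Since all your step~3 bullets use step~2's common-neighbourhood property (e.g.\ two in-neighbours of $w$ sharing an in-neighbour in $U$), they also fail once such vertices are present. The paper cannot rule these vertices out locally. It reduces to minimum degree at least $n/2$ exactly, not $(1/2-\varepsilon)n$, because its later counting uses bounds like $|U|/2-7$. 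It then shows there are at most four such vertices. If there are two, they force at most about $|W|/2+21$ arcs inside the parts, combined with a delete-and-rebuild comparison for the exceptional vertices of degree at most $n/2+14$ (Case~1). If there is one, it is moved to a side before the good-pairs count (Case~2). Without an argument of this kind, your final bound $|U||W|+\max\{|U|,|W|\}$ is not established.
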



\begin{proof}
    The lower bound is given by adding a directed cycle of length $\lceil n/2\rceil$ to the larger part of an antidirected $K_{\lfloor n/2\rfloor,\lceil n/2\rceil}$.

For the upper bound, assume that  $\exo(n,\overrightarrow{F})>\lfloor n^2/4\rfloor+\lceil n/2\rceil$ and consider an $\overrightarrow{F}$-free $n$-vertex oriented graph $\overrightarrow{G}$ with $\exo(n,\overrightarrow{F})$ arcs. First we show that we can assume that every vertex has total degree at least $n/2$. Otherwise, we delete a vertex of total degree less than $n/2$. If the resulting $(n-1)$-vertex graph has a vertex of total degree less than $(n-1)/2$, then we delete such a vertex, and so on. This process stops at a graph $\overrightarrow{G_1}$ on $n_1$ vertices. If $n_1$ is sufficiently large, then we apply the proof below to show that $\overrightarrow{G_1}$ has at most $\lfloor n_1^2/4\rfloor+\lceil n_1/2\rceil$ arcs. Therefore, $\overrightarrow{G}$ has at most $\lfloor n_1^2/4\rfloor+\lceil n_1/2\rceil+\lfloor (n_1+1)/2\rfloor+\lfloor (n_1+2)/2\rfloor+\dots+\lfloor n/2\rfloor\le \lfloor n^2/4\rfloor+\lceil n/2\rceil$ arcs. If $n_1$ is not sufficiently large, then $n_1=O(1)$ and $\overrightarrow{G}$ has at most $\binom{n_1}{2}+\lfloor (n_1+1)/2\rfloor+\lfloor (n_1+2)/2\rfloor+\dots+\lfloor n/2\rfloor\le \lfloor n^2/4\rfloor+\lceil n/2\rceil$ arcs.

This shows that we can assume each vertex has total degree at least $n/2$. Let us apply Theorem \ref{stab3}. We obtain that we can partition the vertex set of $\overrightarrow{G}$ to $U$ and $W$ such that for all but $o(n^2)$ pairs $u\in U$, $w\in W$ we have that $\overrightarrow{uw}$ is an arc of $\overrightarrow{G}$. We consider the partition with the most arcs going from $U$ to $W$; this implies that $u\in U$ has at least as many arcs going to $W$ as arcs coming from $U$, and analogously $w\in W$ has at least as many arcs coming from $U$ as arcs going to $W$.
Observe that $|U|,|W|=n/2+o(n)$.

Let $U'$ denote the set of vertices in $U$ that have outgoing arcs to $|W|-o(n)$ vertices of $W$.  Observe that for any two vertices in $U'$, there is a vertex in $W$ that is the out-neighbor of both vertices. Therefore, any vertex has at most one out-neighbor in $U'$ (otherwise $v$, its two outneighbors $u,u'$ in $U'$, and the common out-neighbor of $u$ and $u'$ create a forbidden configuration).  We define $W'$ analogously: those vertices in $W$ that have incoming arc from $|U|-o(n)$ vertices of $U$. Let $Q$ denote the set of vertices not in $U'\cup W'$, that is, $Q$ is those vertices where $\Omega(n)$ arcs are missing. Note that $|Q|=o(n)$, since the number of arcs from $U$ to $W$ that are not in $\overrightarrow{G}$ is $\Omega(|Q|n)$, but also $o(n^2)$ by the argument in the last paragraph.


Let $Q_1$ denote the set of vertices in $Q$ that have more than $|W|/2+1$ arcs going to $W$ and $Q_2$ denote the set of vertices in $Q$ that have more than $|U|/2+1$ arcs coming from $U$. Clearly, there is at most one vertex belonging to both $Q_1$ and $Q_2$. Let $Q_1'$ denote the set of other vertices of $Q_1$ and $Q_2'$ denote the set of other vertices of $Q_2$, then $Q_1'\subset U$ and $Q_2'\subset W$.

Observe that every vertex has at most one arc going to $U'\cup Q_1'$ and at most one arc coming from $W'\cup Q_2'$. In particular, if $v\in Q_3:=Q\setminus (Q_1\cup Q_2)$, then $v$ has at most one arc going to $U'\cup Q_1'$ and at most $|W|/2=n/4+o(n)$ arcs going to $W$, and similarly at most one arc coming from $W'\cup Q_2'$ and at most $|U|/2=n/4+o(n)$ arcs coming from $U$. Observe that we have listed all the possible arcs incident to $v$, except for the arcs going to $U\setminus (U'\cup Q_1')$ or coming from $W\setminus (W'\cup Q_2')$. Since there are $o(n)$ other vertices in $U\setminus (U'\cup Q_1')$ and in $W\setminus (W'\cup Q_2')$, we have that the total degree of $v$ is at most the number of arcs going to $W$ plus $n/4+o(n)$. Since the total degree of $v$ is at least $n/2$, $v$ has at least $n/4-o(n)$ arcs going to $W$, thus also to $W'\cup Q_2'$. We have already established the matching upper bound, thus $v$ has $n/4-o(n)$ arcs going to $W'\cup Q_2'$.  Analogously, $v$ has $n/4-o(n)$ arcs coming from $U'\cup Q_1'$.


We claim that $|Q_3|\le 4$. Indeed, 5 vertices of $Q_3$ have at least $5n/4-o(n)$ arcs coming from $U'\cup Q_1'$, thus there is a vertex of $U'\cup Q_1'$ with three out-neighbors in $Q_3$. Those 3 vertices have at least $3n/4$ arcs going to $W'\cup Q_2'$, thus two of them have a common neighbor in $W'\cup Q_2'$, a contradiction. Let $Q_3'$ denote the set we obtain by adding the at most one vertex of $Q_1\cup Q_2\setminus (Q_1'\cup Q_2')$ to $Q_3$, then $|Q_3'|\le 5$.

The fact that $|Q_3|\le 4$ further improves our bounds on the degrees of $v\in Q_3$: $v$ has more than $|U|/2-7$ arcs coming from $U'\cup Q_1'$ (and analogously more than $|W|/2-7$ arcs going to $W'\cup Q_2'$). Indeed, at most 1 arc comes from $W'\cup Q_2'$, at most one arc goes to $U'\cup Q_1'$, at most 4 arcs are inside $Q_3'$, and less than $|W|/2+1$ arcs go to $W'\cup Q_2'$, while $v$ has degree at least $n/2=(|U|+|W|)/2$.

Let us now partition $Q_i'$, $i\le 2$ into two parts. Let $P_1$ denote the vertices in $Q_1'$ with at least $|W|/2+7$ out-neighbors in $W$, and $R_1$ denotes the rest of the vertices in $Q_1'$. Observe that a vertex $w\in R_1$ has at most $|U|/2+1$ 
arcs coming from $U$ by the definition of $Q_1'$, at most $|W|/2+6$ arcs going to $W$, at most one arc coming from $W'\cup Q_2'$, and at most one arc going to $U'\cup Q_1'$. The remaining arcs incident to $w$ are also incident to a vertex of $Q_3'$, thus there are at most 5 such arcs. 
Therefore, $w$ has total degree at most $n/2+14$. 
Analogously, $P_2$ denotes the vertices in $Q_2'$ with at least $|U|/2+7$ in-neighbors in $U$, and $R_2$ denotes the rest of the vertices in $Q_2'$, then the vertices of $R_2$ have total degree at most $n/2+14$. 

Let $U_0$ denote the set of vertices in $U'\cup P_1$ that have an out-neighbor in $Q_3'$ and analogously, $W_0$ denote the set of vertices in $W'\cup P_2$ that have an in-neighbor in $Q_3'$. Observe that there is no arc inside $U_0$ or $W_0$, since that would mean that the endpoint of that arc and a vertex of $Q_3'$ have a common in-neighbor, while they clearly have a common out-neighbor. Furthermore, from a vertex of $U_0$, there is no arc to $U'\cup P_1$ by the same reasoning. We claim that there are at most $|(U'\cup P_1)\setminus U_0|$ arcs inside $U'\cup P_1$. Indeed, from each vertex there is at most one arc to vertices of $U'\cup P_1$, but from the vertices of $U_0$, there are no such arcs. Analogously, there are at most $|(W'\cup P_2)\setminus W_0|$ arcs inside $W'\cup P_2$.

CASE 1. Assume that there are at least two vertices $v,v'\in Q_3'$. They cannot have both common in-neighbors and out-neighbors, without loss of generality, the set of in-neighbors $A$ of $v$ and $A'$ of $v'$ are disjoint. Recall that both $v$ and $v'$ have at least $|U|/2-7$ in-neighbors inside $U$, thus all but at most 14 vertices of $U$ have an outgoing arc to $v$ or $v'$. Therefore, all but at most 14 vertices of $U'\cup P_1$ have an outgoing arc to $v$ or $v'$. 
Therefore, there are at most 14 arcs inside $U'\cup P_1$. Recall that every vertex of $Q_3$ has at least $|W|/2-7$ arcs going to $W$, thus $|W_0|\ge |W|/2-7$. 
Therefore, there are at most $|W|/2+7$ vertices inside $W'\cup P_2$ not in $W_0$, hence at most $|W|/2+7$ arcs inside $W'\cup P_2$. This implies that the total number of arcs inside $U'\cup P_1\cup W'\cup P_2$ is at most $|U'\cup P_1|| W'\cup P_2|+14+|(W'\cup P_2)\setminus W_0|\le |U'\cup P_1|| W'\cup P_2|+ |W|/2+21$.


Now we delete $Q_3$, and then some vertices of $R_1\cup R_2$ one by one. After the $i$th step, the remaining vertices form a graph $\overrightarrow{G}^{i}$ with vertex set formed by $U^i\subset U$ and $W^{i}\subset W$. If $R_1\cup R_2$ has a vertex with less than $|U^{i}|/2+7$ in-neighbors in $U^{i}$ and less than $|W^{i}|/2+7$ out-neighbors in $W^{i}$, then we delete such a vertex to obtain $\overrightarrow{G}^{i+1}$. Let $\overrightarrow{G}^j$ be the graph where we stop this. Then the remaining vertices are $U^j$, $W^j$ and at most one additional vertex, in $Q_3'\setminus Q_3$. 

CASE 1.1. There is no vertex in $Q_3'\setminus Q_3$. Then we repeat the argument of the first paragraph in CASE 1,  for $\overrightarrow{G}^j$. The set corresponding to $Q_3\cup R_1\cup R_2$ in $\overrightarrow{G}^j$ is empty, the set corresponding to $U'\cup P_1$ is $U^j$, and the set corresponding to $W'\cup P_2$ is $W^j$. Therefore, the above argument gives that there are at most $|U^j||W^j|+|(W'\cup P_2)\setminus W_0|+14$ arcs in $\overrightarrow{G}^j$.
We have deleted $x=o(n)$ vertices. Each time we deleted a vertex, its degree was at most half the number of vertices $n_i$ in the remaining graph plus 14. 
Now we build a new oriented graph $\overrightarrow{G^*}$. We add a vertex to $U^j$ with outgoing arc to each vertex of $W^j$, or add a vertex to $W^j$ with incoming arc from each vertex of $U^j$, and repeat this $x$ times. Each time we add the new vertex to the smaller part. Therefore, each time we added at least $n_i/2$ vertices. The resulting graph $\overrightarrow{G^*}$ consists of a bipartite graph on $n$ vertices and at most $|(W'\cup P_2)\setminus W_0|+14$ arcs inside the parts, thus contains at most $n^2/4+|(W'\cup P_2)\setminus W_0|+14$ arcs. Let us compare the number of arcs in $\overrightarrow{G}$ and $\overrightarrow{G^*}$. If we start from $\overrightarrow{G}^j$ and return the vertices of $\overrightarrow{G}$ one by one, each time we add back at most $n_i/2+14$ arcs, at most 14 more than when we build $\overrightarrow{G^*}$. This happens $x$ times, thus the total number of arcs in $\overrightarrow{G}$ is at most the total number of arcs in $\overrightarrow{G^*}$ plus $14x$. Therefore, the total number of arcs is at most $n^2/4+|(W'\cup P_2)\setminus W_0|+14\le n^2/4+|W|/2+21+14x=n^2/4+n/4+o(n)$ and we are done.

CASE 1.2. There is an additional vertex $z\in Q_3'\setminus Q_3$. Then we apply the same calculation as in CASE 1.1 to the graph obtained by deleting $z$. This gives that the total number of arcs in $\overrightarrow{G}$ is at most $(n-1)^2/4+14+|(W'\cup P_2)\setminus W_0|+14x+d$, where $d$ is the total degree of $z$.
Recall that our goal is to prove an upper bound of the form $(n-1)^2/4+n+O(1)$, thus we are done unless $d\ge n-|(W'\cup P_2)\setminus W_0|-o(n)$. 
Recall that every vertex has at most one arc going to $U'\cup Q_1'$ and at most one arc coming from $W'\cup Q_2'$. By the fact that $(W'\cup P_2)\setminus W_0 \subset W'\cup Q_2'$ and the definition of $W_0$, there is at most one arc from $(W'\cup P_2)\setminus W_0$ to $z$ and there is no arc from $z$ to $(W'\cup P_2)\setminus W_0$, 
so $z$ is adjacent to all but $o(n)$ of the other vertices of $\overrightarrow{G}$. By the facts that $U'\cup P_1\subset U'\cup Q'_1$ and $W_0\subset W'\cup Q'_2$, we have that at most one arc goes from $z$ to $U'\cup P_1$, and at most one arc goes from $W_0$ to $z$, thus, by our lower bound on $d$,
there are arcs from all but $o(n)$ vertices of $U'\cup P_1$ to $z$, and from $z$ to all but $o(n)$ vertices of $W_0$.  
Let $v\in Q_3=Q'_3\setminus \{z\}$. We have observed that $v$ has $n/4-o(n)$ arcs going to $W'\cup Q_2'$, and $v$ has $n/4-o(n)$ arcs coming from $U'\cup Q_1'$. 
By the definitions of $U_0$ and $W_0$, we conclude that all but $o(n)$ vertices in $U_0$ are in-neighbors of $z$, and roughly $n/4$ of them are in-neighbors of $v$. Similarly, all but $o(n)$ vertices in $W_0$ are out-neighbors of $z$, and roughly $n/4$ of them are out-neighbors of $v$. Hence, $v$ and $z$ have both common in-neighbors and out-neighbors, a contradiction.


CASE 2.
Assume that $|Q_3'|\le 1$. If there exists $v\in Q_3'$, then $v$ has at most one arc coming from $W'\cup Q_2$ 
and at most one arc going to $U'\cup Q_1$. 
The other at least $n/2-2$ arcs incident to $v$ go to $W$ or come from $U$, thus either at least $|U|/2-1$ arcs go from $U$ to $v$, or at least $|W|/2-1$ arcs go from $v$ to $W$. Without loss of generality, at least $|W|/2-1$ arcs go from $v$ to $W$ (in the case both at least $|U|/2-1$ arcs go from $U$ to $v$ and at least $|W|/2-1$ arcs go from $v$ to $W$, then without loss of generality, more arcs go from $v$ to $W$ than from $U$ to $v$). If $v\in W$, then we move $v$ to $U$; if $v\in U$, then we do not change these sets. More precisely, let $U^*:=U\cup \{v\}$ and $W^*:=W\setminus \{v\}$. If $Q_3'=\emptyset$, then $U^*=U$ and $W^*=W$.
Now, we have that any two vertices of $U^*$ have a common out-neighbor in $W^*$, and any two vertices of $W^*$ have a common in-neighbor in $U^*$. This implies that from each vertex of $U^*$ at most one arc goes to $U^*$, and to each vertex of $W^*$, at most one arc comes from $W^*$. 

There are at most $|U^*||W^*|$ arcs between $U^*$ and $W^*$, at most $|U^*|$ arcs inside $U^*$ and at most $|W^*|$ arcs inside $W^*$. Observe that $|U^*||W^*|+|U^*|\le \lfloor n^2/4\rfloor+\lceil n/2\rceil$ and $|U^*||W^*|+|W^*|\le \lfloor n^2/4\rfloor+\lceil n/2\rceil$. Therefore, it is enough to show that the number of arcs is at most $\max\{|U^*||W^*|+|U^*|,|U^*||W^*|+|W^*|\}$.
To that end, we will compare the number of arcs inside $U^*$ or $W^*$ to the number of pairs $u\in U^*$, $w\in W^*$ such that there is no arc between them. We say that such a pair is a \textit{good pair}. Let $a$ denote the number of arcs inside $U^*$ and $b$ denote the number of arcs inside $W^*$, then $a\le |U^*|$ and $b\le |W^*|$.
It is enough to show that the number of good pairs is at least $\min\{a,b\}$.

 We also say that $\overrightarrow{uw}$ with $u\in U^*$, $w\in W^*$ is a \textit{missing arc} if it is not in $\overrightarrow{G}$. Note that the endpoints of a missing arc do not necessarily form a good pair, since that arc can be in $\overrightarrow{G}$ backwards, i.e., the arc $\overrightarrow{wu}$ might be in $\overrightarrow{G}$.

Consider arcs $\overrightarrow{uu'}$ in $U^*$ and $\overrightarrow{ww'}$ in $W^*$. At least one of the arcs $\overrightarrow{uw}$ and $\overrightarrow{u'w'}$ is missing from $\overrightarrow{G}$. 
Let us consider the vertices in $U^*$ that are endpoints of arcs inside $U^*$. Let $u_1,\dots, u_s$ denote these vertices with $u_i$ having $a_i>0$ incoming arcs inside $U^*$. Analogously, $w_1,\dots, w_t$ are the vertices in $W^*$ such that $w_i$ has $b_i>0$ outgoing arcs. Consider now $u_i$ and $w_j$. Observe that either for all in-neighbors $u'$ of $u_i$ the arcs $\overrightarrow{u'w_j}$ are missing, or for all out-neighbors $w'$ of $w_j$ the arcs $\overrightarrow{u_iw'}$ are missing, i.e., at least $\min\{a_i,b_j\}$ arcs are missing between these $a_i+b_j+2$ vertices. 
Without loss of generality, the arcs $\overrightarrow{u'w_j}$ are missing. We claim that each missing arc is counted at most twice. Indeed, such an arc can be missing because of an inside arc in $U^*$ and an inside arc in $W^*$, where either both $u'$ and $w_j$ are starting points of the inside arcs, or both $u'$ and $w_j$ are endpoints of the inside arcs. If they are both starting points, then one of the arcs is $\overrightarrow{u'u_i}$, while the other arc starts with $w_j$. This means we considered the pair $u_i,w_j$ again. If they are both endpoints, then there is a single vertex $w_0$ such that $\overrightarrow{w_0w_j}$ is an arc, thus we considered the pair $u',w_0$.

Observe that $a=\sum_{i=1}^s a_i$ and $b=\sum_{j=1}^t b_j$. 
Assume without loss of generality that $\max\{a_i,b_j\}=a_1$.
Consider the pairs $u_1,w_j$ for each $j\le t$. For each $j$, we find at least $b_j$ missing arcs of the form $\overrightarrow{u'w_j}$ or $\overrightarrow{u_1w'}$. 
More precisely, in the case of the arcs of the form $\overrightarrow{u'w_j}$, we found $a_1\ge b_j$ missing arcs, and at most one of them goes backwards. In the case the arcs of the form $\overrightarrow{u_1w'}$ are missing, we found $b_j$ missing arcs, and for all such $j$, at most one arc goes backwards altogether, since any two vertices of $W$ have a common in-neighbor, thus cannot have a common out-neighbor. 
This way we found at least $b_j-1$ pairs with no arcs in any direction between them. Moreover, we found at least $b_j$ such pairs if we found the missing arcs of the form $\overrightarrow{u'w_j}$ and $b_j<a_1$ or if there is no arc that goes backwards. Thus, not counting the at most one arc that goes backwards to $u_1$, the only case we found less than $b_j$ good pairs is if $b_j=a_1$ and the missing arcs contain $w_j$.

Let $t'$ denote the number of vertices $w_j$ with $b_j=a_1$.
Repeating the above argument for each of these vertices, we obtain at least $b-t'-1$ good pairs. Observe that so far, we have counted each good pair only once, since when we count a good pair twice, then we count it for pairs $u_i,w_j$ and $u_{i'},w_{j'}$ with $i'\neq i$, $j'\neq j$.  
Let $s'$ denote the number of vertices $u_i$ with $a_i=a_1$.

CASE 2.1. Assume that $s'\ge 2$. 

Case 2.1.1. 
If $a_1\le 2$, then we have $a\le 2s$ and $b\le 2t$, without loss of generality $s\ge t$. We find at least $st/2$ missing arcs and at most $t$ backward arcs. If $s<6$, then the number of arcs is at most $|U^*||W^*|+a+b\le n^2/4+20$ and we are done. Otherwise, there are at least $(s-2)t/2\ge 2t\ge b$ good pairs, and we are done.

Case 2.1.2. Let $a_1\ge 3$.
Without loss of generality $a_2=a_1$, and repeat the above with $u_2$. 
Without loss of generality, $\overrightarrow{u_2u_1}$ is not an arc in $\overrightarrow{G}$ ($\overrightarrow{u_1u_2}$ may be an arc or a non-arc). Consider now $j$ with $b_j=a_1$. Our first goal is to show that for $u_1,w_j$ and $u_2,w_j$ we find at least $b_j$ good pairs (that is, we find at least $b_j$ good pairs totally when we consider the pairs $u_1,w_j$ and $u_2,w_j$) not counted for other $w_i$, 
unless the arc that goes backwards to $u_1$ starts at an in-neighbor of $w_j$. This will improve our lower bound on the number of good pairs from $b-t'-1$ to $b-1$.

If the pair $u_1,w_j$ already gave us at least $b_j$ good pairs in the above argument, then there is nothing to prove. Assume now that
the pair $u_1,w_j$ gave us fewer than $b_j$ (i.e., exactly $b_j-1$) good pairs in the above argument, and not because of the arc going backwards to $u_1$. Then we found $b_j$ missing arcs of the form $\overrightarrow{u'w_j}$, where $u'$ is an in-neighbor of $u_1$. Consider now the pair $u_2,w_j$. By the above argument, we find at least $b_j$ missing arcs, where the vertex in $U$ is either $u_2$, or an in-neighbor of $u_2$
. Recall that $u_2$ or an in-neighbor of $u_2$ cannot be an in-neighbor of $u_1$. Therefore, the set of missing arcs we found for $u_1,w_j$ is disjoint from the set of missing arcs we found for $u_2,w_j$. In other words, we find at least $b_j\ge 3$ missing arcs for the pair $u_2,w_j$ that was not found for the pair $u_1,w_j$. We achieved our first goal 
unless all of these $b_j$ missing arcs were counted for some other $w_i$ or go backwards.  At most one of them goes backwards, thus at least $b_j-1\ge 2$ was counted twice. Since they were counted for $u_2,w_j$, the only way to count them another time is if we count them for $u_1,w_i$. If we count a missing arc twice, then we count it for some $u\in U^*$ and an in-neighbor of $u$ in $U^*$, 
thus $u_1$ is an in-neighbor of $u_2$
, $w_i$ is an in-neighbor of $w_j$ and the missing arc is $\overrightarrow{u_1w_j}$. However, this contradicts our assumption that  we found $b_j$ missing arcs of the form $\overrightarrow{u'w_j}$, where $u'$ is an in-neighbor of $u_1$.

We have found $b-1$ good pairs, we only need to find one more. Moreover, if there is no backward arc to $u_1$ (counted for some pair $u_1,w_i$
), then we have found $b$ good pairs. Without loss of generality, the backward arc is $\overrightarrow{w'u_1}$ for some out-neighbor $w'$ of $w_i$, and is counted for the pair $u_1,w_i$. Recall that for each $j\neq i$, for the pairs $u_1,w_j$ and $u_2,w_j$ we have found at least $b_j$ good pairs (not counted for other pairs), thus we are done if the same also holds for $w_i$. 

We have found at least $b_i-1$ good pairs for the pair $u_1,w_i$, of the form $u_1,w$ for some $w\in W^*$ that is an out-neighbor of $w_i$. For the pair $u_2,w_i$, we have found missing arcs either of the form $\overrightarrow{u'w_i}$ for some in-neighbor $u'\in U^*$ of $u_2$, or of the form $\overrightarrow{u_2w^*}$ for some $w^*\in W^*$ that is an out-neighbor of $w_i$. In the first case, we have found $a_1$ missing arcs, but one of them may have been counted, since $u'$ may be $u_1$. Another of these missing arcs may go backwards. Therefore, we found $a_1-2$ new good pairs and we are done. In the second case, these missing arcs have not been counted for pairs $u_1,w_i$, thus we found $b_i$ new missing arcs, and at least $b_i-1$ new good pairs. We are done, unless $b_i=1$ and there is an arc going backwards to $u_2$ from the only out-neighbor $w'$ of $w_i$. But there is an arc from $w'$ to $u_1$, a contradiction.

CASE 2.2. We have $s'\le 1$. Then $s'= 1$, as $s'\geq 1$ by our assumption that $\max\{a_i,b_j\}=a_1$.  Observe that if $t'\ge 2$, then we can repeat CASE 2.1 with $W$ replacing $U$ and complete the proof, thus we can assume that $t'\le 1$. Therefore, we have found at least $b-2$ good pairs when considering the pairs $u_1,w_j$. Our first goal is to improve this bound to $b-1$. If $t'=0$, this is clear. Now assume $t'=1$. Recall that since the forbidden graph is symmetric, originally the role of $U^*$ and $W^*$ is symmetric. This symmetry was broken by the assumption that $\max\{a_i,b_j\}=a_1$, i.e., by the assumption that the largest value of $a_i$ and $b_j$ inside parts belongs to a vertex in $U^*$. In the current case where $s'=1$ and $t'=1$, the symmetry is restored. 
Let $b_1=a_1$, and consider the pair $u_1,w_1$. The corresponding missing arcs each contain either $u_1$ or $w_1$. Because of the symmetry, we can assume that these missing arcs contain $u_1$. Recall that for a pair $u_1,w_j$, we found at least $b_j$ good pairs, unless one of the missing arcs went backwards to $u_1$ or $b_j=a_1$, the missing arcs contain $w_j$ and one of those missing arcs goes backwards. It is clear now that this second possibility does not occur, thus we found at least $b-1$ good pairs. 
Moreover, for each $j$ we found either at least $b_j$ good pairs or at least $b_j-1$ good pairs and an arc that goes backwards to $u_1$. Therefore, we are done unless there is an arc backwards to $u_1$.




If 
there are at most $|U^*|-1$ arcs inside $U^*$, then the total number of arcs is at most $|U^*||W^*|-(b-1)+|U^*|-1+b$ and we are done. Therefore, we can assume that there are $|U^*|$ arcs inside $U^*$. Since every vertex of $U^*$ has at most one out-neighbor in $U^*$, it is only possible if every vertex of $U^*$ has exactly one out-neighbor in $U^*$. In particular, this implies that there is a directed cycle inside $U^*$.

Assume first that there is a directed cycle inside $U^*$ that contains $u_1$. Then there are arcs $\overrightarrow{u'u''}$
and $\overrightarrow{u''u_1}$. Recall that there is an arc $\overrightarrow{wu_1}$ for some $w\in W^*$. Then $\overrightarrow{u'w}$ is missing, otherwise there are two two-arc paths from $u'$ to $u_1$. But it cannot go backwards, since there is at most one backward arc incident to $w$. Therefore, $u',w$ is a good pair. We have counted so far only good pairs incident to $u_1$ or an in-neighbor of $u_1$. Observe that $u'$ is not $u_1$, nor the only out-neighbor of $u'$ is $u_1$. Therefore, $u',w$ is a new good pair, we found at least $b$ good pairs, completing the proof.

Assume now that there is a directed cycle inside $U^*$ that does not contain $u_1$. Then it cannot contain in-neighbors of $u_1$ either, since the only one out-neighbor of each vertex of the cycle must also be in this cycle. An arbitrary vertex in this cycle, paired with any $w_j$ gives a missing arc. That missing arc has not been counted, since all the missing arcs we counted contain $u_1$ or an in-neighbor of $u_1$. We are done, unless that missing arc is backwards, say $\overrightarrow{wu}$. Similar to the previous paragraph, we have arcs $\overrightarrow{u'u''}$ and $\overrightarrow{u''u}$ in the cycle, and then $u',w$ is a good pair by the same reasoning. This is a new good pair, completing the proof.
\end{proof}



\section{Antidirected oriented graphs with bounded out-degrees}

In this section, we will prove an oriented version of F\"{u}redi-Alon-Krivelevich-Sudakov Theorem.

\begin{definition}\label{def2}
Let $G$ be a bipartite digraph with bipartition $U\cup W$. A subset $R\subseteq W$ is {\em $(r,h)$-rich} if each $R'\subseteq R$ with $|R'|=r$ has at least $h$ common in-neighbors in $U$.
\end{definition}

We use $G=G(U\cup W, U \rightarrow W)$ to denote a bipartite digraph $G$ with bipartition $U\cup W$ such that all arcs of $G$ are from $U$ to $W$. For each $u\in U$, we use $N_W^+(u)$ to denote the set of out-neighbours of $u$ in $W$. Similarly, for each $w\in W$, we use $N_U^-(w)$ to denote the set of in-neighbours of $w$ in $U$.

\begin{lemma}\label{proRICHSET}
Let $H=H(A\cup B, A \rightarrow B)$ be an $h$-vertex bipartite digraph such that $|N_B^+(a)|\leq r$ for each $a\in A$. Let $G=G(U\cup W, U \rightarrow W)$ be a bipartite digraph. If $W$ contains an $(r,h)$-rich set of size at least $h$, then $G$ contains a copy of $H$.
\end{lemma}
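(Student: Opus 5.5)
This is the standard greedy embedding from the Füredi / Alon--Krivelevich--Sudakov argument, adapted to arcs directed from $U$ to $W$. Let $R\subseteq W$ be an $(r,h)$-rich set with $|R|\ge h$.

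First we embed $B$ into $R$. Since $|B|\le h\le |R|$, we fix an arbitrary injection $\varphi:B\to R$. Then we embed the vertices of $A$ one by one into $U$, keeping the images distinct.

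Let $a_1,\dots,a_m$ be the vertices of $A$, with $m\le h-|B|$. Suppose $a_1,\dots,a_{i-1}$ have already been mapped to distinct vertices of $U$, and consider $a_i$. Its out-neighbourhood $N_B^+(a_i)$ has at most $r$ vertices. We extend $\varphi(N_B^+(a_i))$ to an $r$-subset $R'\subseteq R$, which is possible because $|R|\ge h\ge r$. If $r>h$ the statement is degenerate: then $|N_B^+(a)|\le |B|<h<r$, and we may replace $r$ by $|B|$ throughout, or simply note that $r\le h$ may be assumed.

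By richness, $R'$ has at least $h$ common in-neighbours in $U$. At most $i-1\le m-1<h$ of these are already used, so we can choose an unused common in-neighbour $u$ and set $\varphi(a_i)=u$. Every arc $\overrightarrow{a_ib}$ of $H$ is then mapped to $\overrightarrow{u\varphi(b)}$, which is an arc of $G$ since $\varphi(b)\in R'\subseteq N_W^+(u)$. Vertices in $U$ and $W$ are automatically distinct because $U$ and $W$ are disjoint. Hence $\varphi$ is an injective homomorphism, and $G$ contains a copy of $H$.

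There is no real obstacle. The only points needing care are the counting, namely that $h$ common in-neighbours are enough to avoid the at most $h-1$ previously used vertices, and the padding of $N_B^+(a_i)$ to exactly $r$ vertices. The padding needs $|R|\ge r$, and this follows from $|R|\ge h$ as long as $r\le h$, which we assume without loss of generality. Vertices $a\in A$ with no out-neighbours are handled the same way, since any $r$-subset of $R$ has at least $h$ common in-neighbours.
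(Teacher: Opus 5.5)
Your argument is the paper's proof. You inject $B$ into the rich set $R$, then greedily send each $a\in A$ to an unused common in-neighbour of $\varphi(N_B^+(a))$; the padding to an $r$-set and the count of at most $h-1$ used vertices are made explicit where the paper leaves them implicit.

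One correction concerns your side remark on $r>h$: you cannot ``replace $r$ by $|B|$''. When $|R|<r$, the $(r,h)$-richness hypothesis is vacuous and says nothing about smaller subsets of $R$. In fact the lemma is false there: take $H$ a single arc, so $h=2$, with $r=3$, and take $G$ arcless with $|W|=2$ and $R=W$. So $r\le h$ must be imposed as a tacit hypothesis rather than derived, exactly as the paper's proof does; this is harmless in its application.
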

\begin{proof} It suffices to embed $H$ into $G$. Let $R$ be an $(r,h)$-rich set of $W$ of size at least $h$ and define map $\varphi:B\rightarrow R$ such that we can embed $B$ into $R$. By the definition of an $(r,h)$-rich set, each $r$-set $R'\subseteq R$ has at least $h$ common in-neighbors. This implies that for any vertex $a\in A$, we can map $a$ to an unused vertex in $N_G^-(\varphi(N_{H}^+(a)))$. Hence, $H$ can be embeded into $G$.
\end{proof}

\begin{lemma}\textbf{(Embedding Lemma)}\label{lemma~embedding}
Let $H=H(A\cup B, A \rightarrow B)$ be an $h$-vertex bipartite digraph such that $|N_B^+(a)|\leq r$ for each $a\in A$. Let $G=G(U\cup W, U \rightarrow W)$ be a bipartite digraph such that $|U|>h\dbinom{|W|}{ r}$ and $|N_W^+(u)|\geq h$ for each $u\in U$. Then $W$ contains an $(r,h)$-rich set of size at least $h$. In particular, $G$ contains a copy of $H$.
\end{lemma}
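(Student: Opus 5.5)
The plan is a pigeonhole argument on $r$-subsets of out-neighborhoods, followed by a greedy construction of a rich set that stops once it has $h$ vertices.

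\emph{Step 1: pigeonhole.} For each $u\in U$, fix an arbitrary $h$-subset $S_u\subseteq N_W^+(u)$; this is possible since $|N_W^+(u)|\ge h$. There are at most $\binom{|W|}{h}$ possible values of $S_u$, but the hypothesis is stated with $\binom{|W|}{r}$, so I would not pigeonhole on $h$-sets directly. Instead, for each $u$ and each $r$-subset $R'\subseteq S_u$, charge $u$ to $R'$. The total number of charges is $|U|\binom{h}{r}$. Call an $r$-set $R'\subseteq W$ \emph{heavy} if it is charged by at least $h$ vertices of $U$; then every heavy $r$-set has at least $h$ common in-neighbors. The light $r$-sets absorb at most $(h-1)\binom{|W|}{r}$ charges. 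Hence more than $|U|\binom{h}{r}-(h-1)\binom{|W|}{r}$ charges land on heavy sets. By the hypothesis $|U|>h\binom{|W|}{r}$, this exceeds $(1-\frac{1}{h})|U|\binom{h}{r}$ when $r\le h$. (If $r>h$, then no vertex of $A$ can actually have $r$ out-neighbors, so we may replace $r$ by $\min(r,h)$ in advance.)

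\emph{Step 2: find a single $u$ all of whose $r$-subsets are heavy.} Averaging over $u$, some $u$ has more than $(1-\frac1h)\binom{h}{r}$ of its $r$-subsets heavy, but this does not force all of them to be heavy. So I would instead run a deletion process. Repeatedly remove from $U$ any vertex $u$ such that $S_u$ contains a light $r$-set. Each light $r$-set $R'$ can cause at most $h-1$ removals, because fewer than $h$ vertices charge it. Hence at most $(h-1)\binom{|W|}{r}<|U|$ vertices are removed, and some $u$ survives. For the surviving $u$, every $r$-subset of $S_u$ is charged by at least $h$ vertices. Therefore $S_u$ is an $(r,h)$-rich set of size $h$.

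A second look shows this is cleaner than it first appears: light sets are defined with respect to the original $U$, and the survivor's $r$-subsets are all heavy, which is exactly what richness requires. Even more simply, one can skip the process. The vertices $u$ with some light $r$-subset in $S_u$ number at most $\sum_{R'\text{ light}}(\#\text{charges on }R')\le (h-1)\binom{|W|}{r}<|U|$. So some $u$ has no light subset, and $S_u$ is the desired rich set.

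\emph{Step 3: conclude.} Lemma~\ref{proRICHSET} applied to $S_u$ then gives a copy of $H$. I expect the main obstacle to be the mismatch between $\binom{|W|}{h}$ and $\binom{|W|}{r}$. The counting over $r$-subsets of the fixed sets $S_u$, bounding the number of "bad" vertices by the total charge on light sets, is the step that resolves it. The edge case $r>h$ (or $r$ exceeding the out-degrees in $H$) needs the remark in Step 1.
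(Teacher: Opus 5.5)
Your argument is correct for $r\le h$ and is essentially the paper's pigeonhole. The paper takes a maximal assignment of vertices $u\in U$ to $r$-subsets of $N_W^+(u)$, with at most $h$ vertices per $r$-set, so some $u'$ stays unassigned and every $r$-subset of $N_W^+(u')$ is saturated. You instead bound directly the number of vertices whose $S_u$ contains a light $r$-set by $(h-1)\binom{|W|}{r}<|U|$; taking $S_u=N_W^+(u)$ would give the paper's slightly stronger conclusion that the whole out-neighborhood is rich.

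The one flaw is your parenthetical reduction for $r>h$: the hypothesis $|U|>h\binom{|W|}{r}$ does not imply $|U|>h\binom{|W|}{h}$. In fact the statement is false when $r>|W|$: take $H=\overrightarrow{S_{2,0}}$ (so $h=3$) with $r=4$, and let $G$ be a single vertex of $U$ sending arcs to three vertices of $W$. So one must simply assume $r\le h$, as the paper tacitly does; this is harmless in Theorem~\ref{main}, where $r$ can be lowered to the maximum out-degree of $A$.
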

\begin{proof} 
Take a maximal partial coloring map $\varphi: U\rightarrow \dbinom{W}{ r}$ (all vertices in $U$ can be viewed as colors assigned to $r$-subsets of $W$) such that the following statements hold:

\begin{enumerate}
\item For any $u\in U$, if $\varphi(u)=R$, then $R\subseteq N_W^+(u)$, where $R$ is an $r$-subset of $W$.\\
\item For any $R\in \dbinom{W}{ r}$,~$\varphi^{-1}(R)$ has size at most $h$. That is, we do not assign more than $h$ colors to any $R$.\\
\item $\varphi$ is injective. That is, $|\varphi(u)|\leq 1$ for any $u\in U$.
\end{enumerate}
As $|U|>h\dbinom{|W|}{ r}$, there exists some vertex $u'\in U$ which is not assigned to any $r$-set in $W$.

\begin{claim}\label{cla0}
Each $h$-set in $N_W^+(u')$ is $(r,h)$-rich.
\end{claim}
\begin{proof}
Let $W'\subseteq N_W^+(u')$ with $|W'|=h$. It suffices to show that any $r$-set $T\subseteq W'$ has at least $h$ common in-neighbors in $U$. Suppose $T$ has less than $h$ common in-neighbors in $U$. Obviously, $u'$ can be viewed as a color assigned to $T$, that is, $\varphi(u')=T$ holds. It contradicts the maximum of $\varphi$. Hence, $T$ has at least $h$ in-coneighbors in $U$ and so $W'$ is $(r,h)$-rich.  This completes the proof of Claim~\ref{cla0}.
\end{proof}

This claim and Lemma~\ref{proRICHSET} imply that $G$ contains a copy of $H$. This completes the proof of Lemma~\ref{lemma~embedding}.
\end{proof}

The following is an oriented version of a random zooming theorem from \cite{Fernandez-Hyde-Liu-Pikhurko-Wu-2023}.

\begin{theorem}\textbf{(Oriented Random Zooming Theorem)}\label{zooming}
Let $d\geq max\{40,2h\}$. Let $G=G(U\cup W, U\rightarrow W)$ be a bipartite digraph such that $|N_W^+(u)|\geq d$ for each $u\in U$. Let $H=H(A\cup B,A\rightarrow B)$ be an $h$-vertex bipartite digraph such that $|N_B^+(a)|\leq r$ for each $a\in A$. If$$\frac{1}{2|W|}\left(\frac{|U|}{4h}\right)^{1/r}\frac{d}{2}\geq \max\{20,h\},$$
then $G$ contains a copy of $H$.
\end{theorem}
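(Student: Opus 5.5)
We reduce to the Embedding Lemma (Lemma~\ref{lemma~embedding}) by random zooming on $W$, in the spirit of Fern\'andez, Hyde, Liu, Pikhurko and Wu. We choose a random subset $W_0\subseteq W$ by keeping each vertex independently with probability $p$. We then let $U_0\subseteq U$ be the set of vertices $u$ with $|N^+_{W_0}(u)|\ge h$, and apply Lemma~\ref{lemma~embedding} to the bipartite digraph $G[U_0\cup W_0]$ with all arcs from $U_0$ to $W_0$. That lemma yields a copy of $H$ as soon as $|U_0|>h\binom{|W_0|}{r}$. Since $\binom{|W_0|}{r}\le |W_0|^r$, it suffices to find a $W_0$ with $|U_0|>h|W_0|^r$.

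The choice of $p$ is dictated by the hypothesis. We take
\[
p=\frac{4h}{d}\cdot\frac{1}{2}\,\text{-ish},
\]
and more precisely we want $pd\ge 2h$ (and $pd\ge 40$, say), so that a typical $u\in U$ keeps at least $h$ out-neighbours in $W_0$. We also want $p|W|$ to be small compared to $(|U|/4h)^{1/r}$. The natural choice is $p=\max\{40,2h\}/d\le 1$, which is allowed by $d\ge\max\{40,2h\}$; the hypothesis $\frac{1}{2|W|}(|U|/4h)^{1/r}\frac d2\ge\max\{20,h\}$ is then exactly the statement that $2p|W|\le (|U|/4h)^{1/r}$, i.e.\ $h(2p|W|)^r\le |U|/4$.

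The probabilistic estimates go as follows. For each $u$, $|N^+_{W_0}(u)|$ dominates a binomial variable with mean at least $pd\ge\max\{40,2h\}$. By a Chernoff bound, $\Pr(|N^+_{W_0}(u)|<h)\le \Pr(\mathrm{Bin}<\mu/2)\le e^{-\mu/8}\le e^{-5}$, which is less than $1/4$. Hence $\mathbb E|U_0|\ge \tfrac34|U|$, and therefore $\Pr(|U_0|\ge |U|/2)\ge 1/2$, by applying Markov's inequality to $|U\setminus U_0|$. Next, $\mathbb E|W_0|=p|W|$, so by Markov $\Pr(|W_0|\ge 2p|W|)\le 1/2$; to leave room, we use $\Pr(|W_0|>2.5p|W|)<1/2$, or we tighten the first event with a slightly stronger Chernoff estimate. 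With positive probability both events occur, and then
\[
h|W_0|^r\le h(2p|W|)^r\le |U|/4<|U|/2\le |U_0|.
\]
Lemma~\ref{lemma~embedding} then gives an $(r,h)$-rich set and hence a copy of $H$. If $|W_0|<r$ the binomial coefficient is zero and the condition holds trivially, but the lemma still needs $|U_0|>0$; this is guaranteed since $|U_0|\ge |U|/2>0$.

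The main obstacle is the constant bookkeeping. We have to make sure the two bad events together have probability below $1$. Concretely, we bound $\Pr(|U_0|<|U|/2)\le 2e^{-5}$ and $\Pr(|W_0|>2p|W|)$ by Chernoff rather than Markov when $p|W|$ is large; when $p|W|$ is small (below about $20$), we have $|W_0|^r$ bounded directly. We also have to match the exact constants $4h$, $2|W|$, $d/2$ and $\max\{20,h\}$ in the hypothesis, possibly by taking $p=2\max\{20,h\}/d$ instead.
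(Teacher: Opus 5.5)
Your proposal is correct and is essentially the paper's argument: take a random $p$-subset of $W$, keep the vertices of $U$ with enough surviving out-neighbours, control $|W_0|$, and apply the Embedding Lemma. The one difference is that you take the smallest admissible $p=\max\{40,2h\}/d\le 1$, so the hypothesis becomes $h(2p|W|)^r\le |U|/4$ and no truncation of $U$ is needed, whereas the paper takes $p=\frac{1}{2|W|}(|U|/4h)^{1/r}$ (shrinking $U$ when this exceeds $1$) and uses the hypothesis to get $pd/2\ge\max\{20,h\}$. Drop the ``$2.5p|W|$'' variant, which no longer closes the final chain once $(5/4)^r\ge 2$; the version you settle on needs only Markov for $|W_0|$, since $2e^{-5}+\tfrac12<1$, and there is no ``small $p|W|$'' case because $p|W|\ge pd\ge 40$ always.
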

\begin{proof}
Let $p=\frac{1}{2|W|}\big(\frac{|U|}{4h}\big)^{1/r}$. If $|U|> 4h(2|W|)^r$, then we replace $U$ by a subset of size exactly $4h(2|W|)^r$, now the inequality $$\frac{1}{2|W|}\left(\frac{|U|}{4h}\right)^{1/r}\frac{d}{2}\geq \max\{20,h\}$$ still holds and we have $p=1$. Otherwise, we have $p<1$.

Let $W'\subseteq W$ be a $p$-random subset of $W$, that is, each vertex is chosen independently at random from $W$ with probability $p$. For each $u\in U$, set $$X_u:=|N_{W'}^+(u)|=\sum_{w\in N_W^+(u)}\textbf{1}_{w\in W'}.$$ Note that $X_u$ is a sum of independent Bernoulli random variables of parameter $p$ with expectation $$\mathbb {E}(X_u)=pd^+(u)\geq pd.$$ Then using a standard lower-tail Chernoff bound and $pd/12\geq 2$ gives that for each $u\in U$, we have
$$\mathbb{P}r(X_u<pd/2)\leq e^{-pd/12}\leq 1/4.$$
Let $$U'=\{u\in U\mid X_u\geq pd/2\}.$$ We will show that Claim~\ref{cla1} holds. We see that if Claim~\ref{cla1} holds, then there exists some choice of $W'$ such that both $|W'|\leq 2p|W|$ and $|U'|\geq |U|/4$ hold. Furthermore, combining with this and the definition of $p$, we have $$|U'|\geq \frac{|U|}{4}=h(2p|W|)^r\geq h(|W'|)^r>h\dbinom{|W'|}{ r}.$$
Therefore, $|U'|>h\dbinom{|W'|}{ r}$. By definitions of $U'$ and $p$, and the inequality $$\frac{1}{2|W|}\left(\frac{|U|}{4h}\right)^{1/r}\frac{d}{2}\geq \max\{20,h\},$$ for each $u'\in U'$, we have $$|N_W^+(u')|\geq |N_{W'}^+(u')|=X_u\geq pd/2=\frac{1}{2|W|}\left(\frac{|U|}{4h}\right)^{1/r}\frac{d}{2}\geq h.$$ By Lemma~\ref{lemma~embedding}, $G$ contains a copy of $H$. 
\end{proof}

It remains to prove the following claim.

\begin{claim}\label{cla1}
$\mathbb{P}r(|W'|>2p|W|)+\mathbb{P}r(|U'|<|U|/4)<1.$
\end{claim}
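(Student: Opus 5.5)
We bound the two probabilities separately and show each is strictly less than $1/2$; more precisely, we aim for $\mathbb{P}r(|W'|>2p|W|)<1/2$ and $\mathbb{P}r(|U'|<|U|/4)\le 1/3$.

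\emph{The first term.} The size $|W'|$ is binomial with mean $p|W|$. If $p=1$ then $W'=W$ and $|W'|=|W|\le 2p|W|$, so this probability is $0$. If $p<1$, Markov's inequality gives
\[\mathbb{P}r(|W'|>2p|W|)\le \mathbb{P}r(|W'|\ge 2p|W|)\le \frac{p|W|}{2p|W|}=\frac12 .\]
This is only non-strict. To get strict inequality I would use a Chernoff upper-tail bound instead: $\mathbb{P}r(|W'|\ge 2p|W|)\le e^{-p|W|/3}$. Since $d\le |W|$, we have $p|W|\ge pd\ge 80$, because the hypothesis gives $pd/2\ge 20$ (and trivially when $p=1$). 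So the bound is at most $e^{-80/3}$, which is tiny.

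\emph{The second term.} For each $u\in U$ we already showed $\mathbb{P}r(u\notin U')\le e^{-pd/12}\le 1/4$, using $pd\ge 40$ (so $pd/12\ge 2$ and $e^{-2}<1/4$). Hence $\mathbb{E}|U\setminus U'|\le |U|/4$. Markov's inequality applied to $|U\setminus U'|$ gives
\[\mathbb{P}r(|U'|<|U|/4)=\mathbb{P}r\left(|U\setminus U'|>\tfrac34|U|\right)\le \frac{|U|/4}{3|U|/4}=\frac13 .\]
Adding the two bounds gives $e^{-80/3}+1/3<1$, which proves the claim.

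The only delicate point is the first term. Plain Markov gives exactly $1/2$, which combined with $1/3$ would still suffice since $1/2+1/3<1$. The one thing to verify is the boundary case $p=1$ after truncating $U$. There $|W'|=|W|$ deterministically, so the first term vanishes. The bound $pd\ge 40$ is also still valid there, since $d\ge 40$. Either version of the argument therefore closes the claim.
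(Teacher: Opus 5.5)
Your proof is correct and follows the paper's route: an averaging/Markov bound on $|U\setminus U'|$ (you get $1/3$ where the paper settles for $1/2$) plus a Chernoff upper-tail bound on $|W'|$ via $p|W|\ge pd$. Your remark that plain Markov ($\le 1/2$) already suffices for the $|W'|$ term is a valid small simplification, and one harmless slip should be corrected: $pd/2\ge 20$ gives $pd\ge 40$, not $80$, so the Chernoff bound is $e^{-40/3}$ (as in the paper).
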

\begin{proof}
To prove this claim, we will show that $\mathbb{P}r(|U'|<|U|/4)\leq 1/2$ and $\mathbb{P}r(|W'|>2p|W|)\leq 1/4$. 

We first prove that $q:=\mathbb{P}r(|U'|<|U|/4)\leq 1/2$. Suppose that $q>1/2$. Note that $$\mathbb{E}(|U'|)=|U|\mathbb{P}r(X_u\geq pd/2)\geq \frac{3|U|}{4}.$$
On the other hand, $$\mathbb{E}(|U'|)\leq q|U|/4+(1-q)|U|\leq \frac{5|U|}{8},$$
a contradiction. Hence, $\mathbb{P}r(|U'|<|U|/4)\leq 1/2$.

We next prove that $\mathbb{P}r(|W'|>2p|W|)\leq 1/4$. By the facts that $|W|\geq |N_W^+(u)|\geq d$ and  $$pd/2=\frac{1}{2|W|}\left(\frac{|U|}{4h}\right)^{1/r}\frac{d}{2}\geq \max\{20,h\},$$ we have $$\mathbb{E}(|W'|)=p|W|\geq pd\geq 40.$$
Combining this with an upper-tail Chernoff bound implies $$\mathbb{P}r(|W'|>2p|W|)\leq e^{-p|W|/3}\leq e^{-\frac{40}{3}}\leq 1/4.$$ This completes the proof of Claim~\ref{cla1}.
\end{proof}

\begin{lemma}\label{subdigraph1}
For any oriented graph $G$, there exists a spanning bipartite subdigraph $H=H(X\cup Y, X\rightarrow Y)$  such that  $|E(H)|\geq \frac{|E(G)|}{4}$ and $\big||X|-|Y|\big|\leq 1$.
\end{lemma}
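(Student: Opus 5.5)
We prove the statement with a random balanced bipartition of the vertex set, followed by a random choice of the arc direction, and an averaging argument to extract a good outcome. Let $n=|V(G)|$ and $m=|E(G)|$.

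First choose a partition $V(G)=V_1\cup V_2$ uniformly at random among all partitions with $|V_1|=\lfloor n/2\rfloor$ and $|V_2|=\lceil n/2\rceil$. This makes the condition $\big||X|-|Y|\big|\le 1$ automatic. Fix an arc $\overrightarrow{uv}$ of $G$. We want the probability that $u\in V_1$ and $v\in V_2$:
\[
\Pr(u\in V_1,\,v\in V_2)=\frac{\lfloor n/2\rfloor\lceil n/2\rceil}{n(n-1)}\ge \frac14 .
\]
To see the inequality, write $\lfloor n/2\rfloor\lceil n/2\rceil=\lfloor n^2/4\rfloor$. For even $n$ we have $\frac{n^2/4}{n(n-1)}=\frac{n}{4(n-1)}\ge\frac14$. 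For odd $n$ we have $\frac{(n^2-1)/4}{n(n-1)}=\frac{n+1}{4n}\ge \frac14$.

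By linearity of expectation, the expected number of arcs going from $V_1$ to $V_2$ is at least $m/4$. So some balanced partition has at least $m/4$ arcs from $V_1$ to $V_2$. Take $X=V_1$, $Y=V_2$, and let $H$ consist of all vertices together with exactly the arcs from $X$ to $Y$. Then $H$ is spanning, every arc goes from $X$ to $Y$, and $|E(H)|\ge |E(G)|/4$.

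A more symmetric route also works. Take a uniform balanced partition into parts $P,Q$. Each arc then has its endpoints on opposite sides with probability $\frac{2\lfloor n^2/4\rfloor}{n(n-1)}\ge\frac12$. Among the crossing arcs, one of the two directions ($P\to Q$ or $Q\to P$) carries at least half of them. Orient $X$ and $Y$ accordingly; swapping the roles of the parts keeps the sizes balanced.

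Only one step needs care: checking the bound $\lfloor n^2/4\rfloor\ge n(n-1)/4$ in the odd case, which was done above. Everything else is linearity of expectation. Note that we do use that $G$ is oriented (or at least a digraph), so that each arc has a well-defined tail and head.
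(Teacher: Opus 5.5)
Your proof is correct and follows the same approach as the paper: a uniformly random balanced bipartition, linearity of expectation, and keeping only the arcs from $X$ to $Y$. Your version is slightly more careful than the paper's. You compute the exact probability $\lfloor n^2/4\rfloor/(n(n-1))\ge 1/4$ and handle odd $n$ directly, whereas the paper assumes $n$ even without loss of generality and states the probability as exactly $1/4$.
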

\begin{proof}
Without loss of generality, we assume that $|G|$ is even. Now randomly partition the vertex set $V(G)$ into two subsets $X$ and $Y$ with $|X|=|Y|=\frac{|V(G)|}{2}$. For each $e\in E(G)$, we use $\mathcal{A}_e$ to denote the event that $e$ is from  $X$ to $Y$ in $H$. Observe that $\mathbb{P}r(\mathcal{A}_e)=\frac{1}{4}$.
Let \[
X_e=\left\{
   \begin{array}{ll}
     1, &\mbox {if $\mathcal{A}_e$ occurs;}\\
     0, &\mbox {otherwise.}
   \end{array}
   \right.
\]
Clearly, $E(H)=\sum_{e\in E(G)}X_e$. By the linearity of expectation, we have
$$\mathbb{E}[E(H)]=\mathbb{E}[\sum_{e\in E(G)}X_e]=\sum_{e\in E(G)}\mathbb{E}[X_e]=\sum_{e\in E(G)}\mathbb{P}r(\mathcal{A}_e)=\frac{|E(G)|}{4}.$$
Hence, there is a choice of $X$ and $Y$, such that $H$ is a spanning bipartite subdigraph $H=H(X\cup Y, X\rightarrow Y)$  such that  $|E(H)|\geq \frac{|E(G)|}{4}$ and $\big||X|-|Y|\big|\leq 1$, as desired.
\end{proof}

For a digraph $D$, let $$\Delta(D)=\max\{d(v): v\in V(D)\},d(D)=2|E(D)|/n, \delta(D)=\min\{d(v): v\in V(D)\},$$  where $d(v)=d^+(v)+d^-(v)$. A digraph $D$ is called {\em $K$-almost regular} if $\Delta(D)\leq K\delta(D)$. We need to show the following result on $K$-almost regular subdigraph which could be seen as an oriented version of  $K$-almost regular subgraph theorem by Erd\H{o}s and Simonovits~\cite{Erdos-Simonovits1969}.

\begin{lemma}\label{almost regular}
Let $\varepsilon=1-\frac{1}{r}\in (0,1),~c>0,~K=20\cdot 2^{\frac{1}{{\varepsilon}^2}+1}$. Let $H=H(X\cup Y, X\rightarrow Y)$ be a bipartite digraph such that $|E(H)|= \frac{c}{4}n^{1+\varepsilon}$ and $\big||X|-|Y|\big|\leq 1$, where $n=|H|$ is sufficiently large. Then $H$ contains an $n_s$-vertex $K$-almost regular subdigraph $H_s$ with $n_s \geq \frac{8}{cK}n^{\frac{\varepsilon-
\varepsilon^2}{4+4\varepsilon}} $ and $|E(H_s)|\geq \frac{c}{10}{n_s}^{1+\varepsilon}$.
\end{lemma}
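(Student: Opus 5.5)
The approach is to follow the Erd\H{o}s--Simonovits argument and apply it to the underlying bipartite graph of $H$. Orientation plays no role here: all arcs go from $X$ to $Y$, and $d(v)=d^+(v)+d^-(v)$ is the ordinary degree. So it suffices to prove the unoriented statement for a bipartite graph $G_0=H$ with $\frac c4 n^{1+\varepsilon}$ edges and a balanced bipartition. Any subgraph we extract automatically inherits the orientation $X\rightarrow Y$.

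\emph{Step 1: an iterative halving procedure.} Put $G_0=H$ and $n_0=n$. Given $G_i$ on $n_i$ vertices, let $m_i=|E(G_i)|$ and $\gamma_i=m_i/n_i^{1+\varepsilon}$. We first delete, repeatedly, every vertex of degree below $\frac{1}{2}\cdot\frac{m_i}{n_i}$, i.e.\ below a quarter of the average degree; call this the standard low-degree deletion. If the remaining graph has maximum degree at most $K$ times its minimum degree, we stop and output it. Otherwise we split its vertex set into two halves (within each side, keeping the balance). One half is the set of the $\lfloor n_i/2\rfloor$ vertices of largest degree, and we pass to a suitable subgraph $G_{i+1}$ spanned by few vertices but many edges. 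The key inequality to establish at each step is
\[
\gamma_{i+1}\ \ge\ \gamma_i\Bigl(1+\tfrac{c'}{\,\cdot\,}\Bigr)\quad\text{or}\quad \gamma_{i+1}\ge 2^{\varepsilon^2}\gamma_i ,
\]
using $n_{i+1}\le n_i/2$ together with the fact that, when the graph is not $K$-almost regular, a large fraction of the edges meets the high-degree vertices. Concretely, if more than half of the edges are incident to the top $n_i/2^{k}$ vertices (with $k\approx 1/\varepsilon^2$), then these $n_i2^{-k}$ vertices together with their neighbourhoods give a denser graph in the normalized sense.

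\emph{Step 2: termination and bookkeeping.} Because $\gamma_i\le n_i^{1-\varepsilon}$ trivially, the density parameter cannot increase forever. The number of steps is bounded in terms of $\log n$, and the loss factors have the form $(1-2^{-?})$ per step. Their product stays above $\frac{4}{10}$, which gives $|E(H_s)|\ge\frac{c}{10}n_s^{1+\varepsilon}$ at the end. We then bound $n_s$ from below: the edges satisfy $\frac c{10}n_s^{1+\varepsilon}\le n_s^2$ and, more precisely, $m_s\le n_s\Delta\le Kn_s\delta$. Comparing this with the number of halvings performed, which is at most a $\frac{1}{4+4\varepsilon}\cdot(\varepsilon-\varepsilon^2)$ fraction of $\log_2 n$ before $\gamma$ would exceed its trivial upper bound, yields $n_s\ge \frac{8}{cK}n^{\frac{\varepsilon-\varepsilon^2}{4+4\varepsilon}}$. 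The specific constant $K=20\cdot2^{1/\varepsilon^2+1}$ comes from choosing $k=1/\varepsilon^2+1$ in the dichotomy of Step 1: $2^{k}$ accounts for the top-degree part, and the factor $20$ absorbs the low-degree deletion factor.

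\emph{Main obstacle.} The delicate point is the dichotomy in Step 1. When the minimum-degree-cleaned graph fails to be $K$-almost regular, we must exhibit a subgraph on at most half the vertices whose value $\gamma$ grows by a factor bounded away from $1$ (roughly $2^{\varepsilon^2}$ or better). At the same time the cumulative edge losses must be kept summable, so that the final density is still $\frac{c}{10}$. I would first try the original Erd\H{o}s--Simonovits choice, namely the subgraph induced by the top $n_i/2^{k}$ degree vertices together with the whole other side, and check that the exponents match the claimed $\frac{\varepsilon-\varepsilon^2}{4+4\varepsilon}$. If the exponents do not fit, I would adjust $k$.
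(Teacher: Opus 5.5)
You have a genuine gap exactly where you place the ``main obstacle'', and as formulated it cannot be closed.

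First, your dichotomy is conditioned on the wrong event. Failure of $K$-almost regularity does not force a large share of the edges onto high-degree vertices: a single vertex of huge degree already breaks almost-regularity. In that situation your procedure has no move. The paper conditions instead on how many arcs meet the top part. Set $t=K/20=2^{1/\varepsilon^2+1}$ and split $V(H)$ by degree into $2t$ equal parts, with $V_1$ the top part. If at most half the arcs meet $V_1$, delete $V_1$ and then strip vertices of degree below $d_0=\frac{c}{40}n^{\varepsilon}$. At least $\frac c8 n^{1+\varepsilon}-nd_0=\frac{c}{10}n^{1+\varepsilon}$ arcs survive. Every survivor has degree at most the minimum degree in $V_1$, hence at most $\frac{2\cdot\frac c8 n^{1+\varepsilon}}{n/(2t)}=\frac{ct}{2}n^{\varepsilon}=20t\,d_0=Kd_0$. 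This is where $K=20t$ comes from.

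Second, neither of your candidate subgraphs for the other case works. The top vertices together with their neighbourhoods may be all of $V(G_i)$. The top vertices together with the whole other side have more than $n_i/2$ vertices and form a very lopsided graph. At the next round the same move barely reduces the vertex count while the edge count can halve, so $\gamma$ can drop.

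The missing idea is to shrink \emph{both} sides by a constant factor. If at least half the arcs meet $V_1$, pigeonhole over the $2t$ parts gives some $V_i$ with $e(H[V_1,V_i])\ge e(H)/(4t)$ on only $n/t$ vertices. Since $t^{1+\varepsilon}/(4t)=2^{1/\varepsilon+\varepsilon}/4\ge 1$, the normalized density $e/m^{1+\varepsilon}$ does not decrease. So one can recurse and clean only once, at the very end.

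Your Step 2 bookkeeping does not hold up either. Nothing in your scheme makes the losses decay: cleaning at a quarter of the average degree can cost half the edges in every round. So a bounded product of per-step loss factors is not available. The invariant has to be the normalized density, which the two-sided shrinking step preserves. That threshold would also be too high for the paper's final cleaning, where after removing $V_1$ only half the edges are guaranteed.

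The number of rounds is also not an $\frac{\varepsilon-\varepsilon^2}{4+4\varepsilon}$-fraction of $\log_2 n$. That would leave roughly $n^{1-\varepsilon'}$ vertices, far more than claimed. In the paper, after $k$ rounds there are $m_k\approx n/t^k$ vertices and at least $(4t)^{-k}\frac c4 n^{1+\varepsilon}$ arcs. Then $e\le m_k^2$ gives $(t/4)^k\le \frac4c n^{1-\varepsilon}$, and hence $\lg m_k\ge \bigl(1-(1-\varepsilon)\frac{\lg t}{\lg (t/4)}\bigr)\lg n-O(1)=\frac{\varepsilon-\varepsilon^2}{1+\varepsilon}\lg n-O(1)$. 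The stated exponent is a quarter of this, which leaves room for the additive constant.

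Your reduction to the underlying bipartite graph is fine; the paper's proof never uses the orientation either. But without the two-sided shrinking step the argument has nothing driving it.
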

\begin{proof}
Let $t=K/20=2^{\frac{1}{\varepsilon^2}+1}$. Partition $V(H)$ into $2t$ subsets, say $V_1,V_2,\ldots, V_{2t},$ such that each of them has equal size (that is, $|V_i|=n/2t$ for any $i\in[2t]$), and $V_1$ contains the highest degree vertices. We divide the proof into the following two cases.\\

Case 1: At most half of the arcs are incident to $V_1$ in $H$. Let $H_0=H-V_1$. Clearly, $$e(H_0)\geq e(H)-\frac{e(H)}{2}= \frac{c}{8}n^{1+\varepsilon}.$$
We  repeatedly delete vertices of degree less than $d_0=\frac{c}{40}n^\varepsilon$ until no such vertices exist. We denote the final graph as $H_s$ and let $n_s=|V(H_s)|$. Now we have $$e(H_s)\geq e(H_0)-nd_0=\frac{c}{8}n^{1+\varepsilon}-\frac{c}{40}n^\varepsilon\cdot n=\frac{c}{10}n^{1+\varepsilon}\geq \frac{c}{10}{n_s}^{1+\varepsilon}.$$
By the facts that $$\delta(H_s)\geq d_0=\frac{c}{40}n^\varepsilon$$ and $$\Delta(H_s)\leq d(V_1) \leq \frac{\frac{c}{8}n^{1+\varepsilon}\cdot 2}{n/2t}=\frac{ct}{2}\cdot n^\varepsilon,$$ we have
$\Delta(H_s)\leq K\delta(H_s)$. Hence, $H_s$ is a $K$-almost regular digraph. \\
Since $$\frac{1}{5}n^{1+\varepsilon}\leq 2e(H_s)\leq n_s\Delta(H_s)\leq n_s\cdot \frac{ct}{2} n^\varepsilon,$$ we have $$n_s\geq \frac{2}{5ct}n=\frac{8}{cK}n> \frac{8}{cK}n^{\frac{\varepsilon-
\varepsilon^2}{4+4\varepsilon}}.$$


Case 2: At least half of the arcs are incident to $V_1$ in $H$. By the pigeonhole principle, there exists $V_i~(i\in \{2,3,\ldots,2t\})$ such that 
$$e(H[V_1,V_i])\geq \frac{\frac{c}{8}n^{1+\varepsilon}}{2t}=\frac{c}{16t}n^{1+\varepsilon}=\frac{e(H)}{4t}.$$
Let $m_1=|V(H[V_1,V_i])|$. We apply the above (splitting) method to $H[V_1,V_i]=H^{m_1}$ again. Either we obtain a $K$-almost regular digraph like in Case 1 or an $H^{m_2}$ like $H^{m_1}$ in Case 2. In the latter case, we use an argument similar to $H^{m_2}$ and so on, then we prove that this process will terminate at a large subdigraph satisfying the condition of Case 1.

Indeed, for the graph $H^{m_k}$, we have $$e(H^{m_k})\geq \frac{1}{(4t)^k}e(H)= \frac{1}{(4t)^k}\cdot \frac{c}{4}n^{1+\varepsilon},$$
and $$  m_k\thickapprox \frac{1}{t^k}n.$$
Therefore,
$$\frac{1}{(4t)^k}\cdot \frac{c}{4}n^{1+\varepsilon}\leq e(H^{m_k}) \leq {m_k}^2 \thickapprox (\frac{1}{t^k}n)^2=\frac{1}{t^{2k}}n^2.$$
Thus $$\left(\frac{t}{4}\right)^k\leq \frac{4}{c}n^{1-\varepsilon},$$
and consequently$$k\leq \frac{\lg ({\frac{4}{c}n^{1-\varepsilon}})}{\lg\left({\frac{t}{4}}\right)}.$$
This means that the splitting procedure will stop after at most $\frac{\lg \left({\frac{4}{c}n^{1-\varepsilon}}\right)}{\lg\left({\frac{t}{4}}\right)}$ steps.

On the other hand,
$$\lg{m_k}\thickapprox \lg{\frac{n}{(t)^k}}=\lg{n}-k\lg{t}.$$
Therefore,
\begin{equation*}
\begin{aligned}
\lg{m_k}&\geq \lg{n}- \frac{\lg ({\frac{4}{c}n^{1-\varepsilon}})}{\lg\left({\frac{t}{4}}\right)}\cdot\lg{t}\\
&=\lg{n}- \frac{\lg {\left(\frac{4}{c}\right)+(1-\varepsilon)\lg{n}}}{\lg\left({\frac{t}{4}}\right)}\cdot \lg{t}\\
&=\left(1-(1-\varepsilon)\frac{\lg{t}}{\lg({\frac{t}{4})}}\right)\lg{n}-\frac{\lg{\left(\frac{4}{c}\right)}\lg{t}}{\lg{\left(\frac{t}{4}\right)}}\\
&\geq \frac{1}{4}\left(1-(1-\varepsilon)\frac{\lg{t}}{\lg{\left(\frac{t}{4}\right)}}\right)\lg{n}\\
&=\frac{1}{4}\left(1-(1-\varepsilon)\frac{\frac{1}{\varepsilon^2}+1}{\frac{1}{\varepsilon^2}-1}\right)\lg{n}\\
&=\frac{\varepsilon-\varepsilon^2}{4+4\varepsilon}\lg{n}.\\
\end{aligned}
\end{equation*}
Note that the second inequality holds since $\frac{\lg{\left(\frac{4}{c}\right)}\lg{t}}{\lg{\left(\frac{t}{4}\right)}}$ is a constant.

Now let $\varepsilon'=\frac{\varepsilon-
\varepsilon^2}{4+4\varepsilon}$, we have
$$m_k\geq n^{\varepsilon'}=n^{\frac{\varepsilon-
\varepsilon^2}{4+4\varepsilon}}.$$
This means that the resulting digraph $H_s$ will have at least $\frac{8}{cK}n^{\varepsilon'}$ vertices. 
Clearly, $H_s$ is $K$-almost regular and $e(H_s)\geq \frac{c}{10}{n_s}^{1+\varepsilon}$.
\end{proof}

Now we are ready to prove Theorem \ref{main} which we restate here for convenience.

\begin{thm*}
Let $H=H(A\cup B,A\rightarrow B)$ be an $h$-vertex bipartite digraph such that $|N_B^+(a)|\leq r$ for each $a\in A$. Then there exists a constant $c=c(H)$ such that $$\exo(n,H)\le c\cdot n^{2-\frac{1}{r}}.$$
\end{thm*}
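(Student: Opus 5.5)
We argue by contradiction and chain the three tools of this section. Fix $H$ with parameters $h,r$, and let $G$ be an $n$-vertex oriented graph with $|E(G)|= c\,n^{2-1/r}$ arcs, where $c=c(H)$ is a large constant chosen at the end. We will show that $G$ contains $H$. Put $\varepsilon=1-\frac1r$, so $2-\frac1r=1+\varepsilon$. (If $r=1$, each $a\in A$ has at most one out-neighbour, $H$ is a disjoint union of in-stars, and a linear bound follows directly by Lemma~\ref{lemma~embedding}; so we assume $r\ge 2$, which gives $\varepsilon\in(0,1)$ as Lemma~\ref{almost regular} requires.)

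\textbf{Step 1.} Apply Lemma~\ref{subdigraph1} to obtain a spanning bipartite subdigraph $H'=H'(X\cup Y,X\rightarrow Y)$ with $||X|-|Y||\le 1$ and at least $\frac c4 n^{1+\varepsilon}$ arcs. After deleting arcs if necessary, the hypothesis of Lemma~\ref{almost regular} holds.

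\textbf{Step 2.} Lemma~\ref{almost regular} gives a $K$-almost regular subdigraph $H_s$ on $n_s\to\infty$ vertices with $e(H_s)\ge \frac{c}{10}n_s^{1+\varepsilon}$. Here $K$ depends only on $r$, and $H_s$ inherits the bipartition, with all arcs going $X_s\rightarrow Y_s$. Almost regularity gives $\delta(H_s)\ge \Delta(H_s)/K\ge d(H_s)/K\ge \frac{c}{5K}n_s^{\varepsilon}$. In particular every $u\in U:=X_s$ has out-degree at least $d:=\frac{c}{5K}n_s^{\varepsilon}$ into $W:=Y_s$, because all its arcs go to $W$.

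\textbf{Step 3.} We need $|U|$ comparable to $n_s$ for the zooming inequality. Since $e(H_s)\le |U|\Delta(H_s)\le |U|K\delta$ and $e(H_s)\ge n_s\delta/2$, we get $|U|\ge n_s/(2K)$. Also $|W|\le n_s$. Then
$$\frac{1}{2|W|}\Big(\frac{|U|}{4h}\Big)^{1/r}\frac d2\ \ge\ \frac{1}{2n_s}\Big(\frac{n_s}{8Kh}\Big)^{1/r}\frac{c}{10K}n_s^{1-1/r}=\frac{c}{20K(8Kh)^{1/r}},$$
which is a constant independent of $n_s$. Choosing $c$ large enough in terms of $h$ and $r$ makes this at least $\max\{20,h\}$. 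Moreover $d\ge\max\{40,2h\}$ once $n_s$ is large, and $n_s\ge\frac{8}{cK}n^{\varepsilon'}$ guarantees this for $n$ large; small $n$ are absorbed into $c$.

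\textbf{Step 4.} Theorem~\ref{zooming} now yields a copy of $H$ in $H_s\subseteq G$, a contradiction.

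The main obstacle is Step 3: we must ensure that the minimum out-degree from $U$ and the size of $U$ both scale correctly after passing to the almost regular subgraph, so that the $n_s$-powers cancel exactly. This cancellation happens because $1/r+\varepsilon=1$. It also relies on the almost-regularity constant $K$ and all other constants being independent of $n$.
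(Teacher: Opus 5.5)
Your proposal is correct and follows the paper's proof: Lemma~\ref{subdigraph1}, then Lemma~\ref{almost regular}, then Theorem~\ref{zooming}, with the powers of $n_s$ cancelling because $\frac1r+\varepsilon=1$. Your Step 3 bounds $|U|\ge n_s/(2K)$ and $|W|\le n_s$ using only $K=K(r)$, which is cleaner than the paper's $H_s$-dependent ratios $K_1,K_2$, and your separate treatment of $r=1$ covers a case the paper silently excludes, though there the right tool is Lemma~\ref{proRICHSET} (once $c>4h$, at least $h$ vertices of $Y$ have in-degree at least $h$) rather than the Embedding Lemma.
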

\begin{proof} We will pick $c$ later.
Let $G$ be an $n$-vertex oriented graph with more than $cn^{2-\frac{1}{r}}$ arcs. By Lemma~\ref{subdigraph1}, there exists a spanning bipartite subdigraph $G'=G'(X\cup Y,X\rightarrow Y)$ such that $$|E(G')|\geq \frac{|E(G)|}{4}\geq \frac{c}{4}n^{2-\frac{1}{r}}, \big||X|-|Y|\big|\leq 1.$$ 

We can slightly decrease $c$ such that $\frac{c}{4}n^{2-\frac{1}{r}}$ is an integer and we can assume that $|E(G')|= \frac{c}{4}n^{2-\frac{1}{r}}$ by deleting arcs if necessary.
Let $\varepsilon=1-\frac{1}{r}\in (0,1)$ and $K=20\cdot 2^{\frac{1}{{\varepsilon}^2}+1}$. By Lemma~\ref{almost regular}, $G'$ contains an $n_s$-vertex $K$-almost regular subdigraph $H_s$ with order $n_s=|H_s|> \frac{8}{cK}n^{\frac{\varepsilon-\varepsilon^2}{4+4\varepsilon}}$ and size $|E(H_s)|\geq \frac{c}{10}{n_s}^{1+\varepsilon}$. Moreover, $H_s=H_s(X_s\cup Y_s,X_s\rightarrow Y_s)$ is an oriented bipartite graph. 
Therefore, 
$$K\delta(H_s)\geq \Delta(H_s) \geq d(H_s)\geq \frac{c}{5}{n_s}^{\varepsilon}\geq \delta(H_s).$$
Hence, there exist two positive constants $K_1$ and $K_2$ such that $$K_1\delta(H_s)=d(H_s)=K_2\Delta(H_s)\geq\frac{c}{5}{n_s}^{\varepsilon}.$$
Therefore, the number of vertices in $X_s$ and $Y_s$ satisfy the following inequality
$$\frac{K_2|E(H_s)|}{d(H_s)}=\frac{|E(H_s)|}{\Delta(H_s)}\leq |X_s|,|Y_s|\leq \frac{|E(H_s)|}{\delta(H_s)}=\frac{K_1|E(H_s)|}{d(H_s)}.$$
Hence, we can get
$$\frac{K_2}{K_1}\leq \frac{|X_s|}{|Y_s|}\leq \frac{K_1}{K_2}.$$

Now we pick $c$ to be slightly larger than $$c(H)=\max\{20,h\}\cdot 20{K_1}^{1+\frac{1}{r}}\cdot \left(\frac{K_2}{4h}\right)^{\frac{1}{r}}\cdot \left(\frac{K_1}{K_2+K_1}\right)^{1-\frac{1}{r}},$$
so that for sufficiently large $n$, even after decreasing $c$ slightly (to ensure that $\frac{c}{4}n^{2-\frac{1}{r}}$ is an integer), we have that $c\ge c(H)$. We let $d=\delta(H_s).$

Now we have $$|N_{Y_s}^+(u)|\geq d\geq\frac{c}{5K_1}{n_s}^\varepsilon=\frac{c}{5K_1}{n_s}^{1-\frac{1}{r}} \geq \max\{40,2h\}$$ for each $u\in X_s$. Furthermore, the following holds

\begin{equation*}
\begin{aligned}
\frac{1}{2|Y_s|}\cdot \left(\frac{|X_s|}{4h}\right)^{\frac{1}{r}}\cdot \frac{d}{2}&\geq \frac{1}{2|Y_s|}\cdot \left(\frac{|X_s|}{4h}\right)^{\frac{1}{r}}\cdot\frac{\frac{c}{5K_1}{n_s}^{1-\frac{1}{r}}}{2}\\
&=\frac{c}{20K_1}\cdot \frac{1}{(4h)^{\frac{1}{r}}}\cdot \left(\frac{|X_s|}{|Y_s|}\right)^{\frac{1}{r}}\cdot \left(\frac{n_s}{|Y_s|}\right)^{1-\frac{1}{r}}\\
&\geq \frac{c}{20K_1}\cdot \frac{1}{(4h)^{\frac{1}{r}}}\cdot \left(\frac{K_2}{K_1}\right)^{\frac{1}{r}}\cdot \left(1+\frac{K_2}{K_1}\right)^{1-\frac{1}{r}}\\
&=\frac{c}{20K_1^{1+\frac{1}{r}}}\cdot \left(\frac{K_2}{4h}\right)^{\frac{1}{r}}\cdot \left(\frac{K_2+K_1}{K_1}\right)^{1-\frac{1}{r}}\\
&\geq \max\{20,h\}.\\
\end{aligned}
\end{equation*}

By Theorem~\ref{zooming}, $H_s$ contains a copy of $H$, and so $G$ contains a copy of $H$. Hence, $\exo(n,H)\leq cn^{2-\frac{1}{r}}$. This completes the proof.
\end{proof}
Note that this bound is sharp, as shown by the antidirected orientation of $K_{r,t}$ for $r$ sufficiently large, as shown by a theorem of Kollár, Rónyai and Szabó \cite{krs}. However, it is not sharp for every graph satisfying the assumption, since for example the path is such a graph.



\section{Concluding remarks}

Let us recall that $\exd(n,\overrightarrow{F})$ has attracted more attention than $\exo(n,\overrightarrow{F})$  so far. Our main result, Theorem \ref{main} is equivalent to the statement $\exd(n,H)\le c\cdot n^{2-\frac{1}{r}}$, thus it gives a new result in that setting as well.

Our second main contribution is the stability method in this setting. This does not extend completely to directed graphs. However, in the case the forbidden directed graph is an orientation $\overrightarrow{F}$ of a bipartite $F$, then it does. We know that the underlying graph of an $\overrightarrow{F}$-free oriented $n$-vertex graph with at least $\left(\frac{z(\overrightarrow{F})-2}{z(\overrightarrow{F})-1}-o(1)\right)\binom{n}{2}$ edges can be turned to $T(n,z(\overrightarrow{F}-1))$ by adding and deleting $o(n^2)$ edges, but this does not say anything about the largest $\overrightarrow{F}$-free directed $n$-vertex graph, since that may contain edges directed both directions. However, the subgraph consisting of such edges must be $F$-free. Therefore, by deleting $o(n^2)$ edges we obtain an oriented graph, and then by adding and deleting $o(n^2)$ edges, we obtain  $T(n,z(\overrightarrow{F})-1)$.

\vskip 1cm

\noindent {\bf Acknowledgement.} We are thankful to Professor Hong Liu for discussions on the undirected version of random zooming theorem.

\end{document}